\theoremstyle{plain}
\newtheorem{theorem}{Theorem}[section]
\newtheorem{lemma}[theorem]{Lemma}
\newtheorem{corollary}[theorem]{Corollary}
\theoremstyle{definition}
\newtheorem{assumption}{Assumption}
\theoremstyle{remark}
\definecolor{bgcolor}{rgb}{0.8,1,1}
\definecolor{bgcolor2}{rgb}{0.8,1,0.8}
\def \R {\mathbb R}
\def\R{\mathbb{R}}
\def\R{\mathbb R}
\def\<#1,#2>{\langle #1,#2\rangle}
\begin{document}
\title{Methods for Solving Variational Inequalities with Markovian Stochasticity}

\author{
\name{Vladimir Solodkin\textsuperscript{a}\thanks{CONTACT Vladimir Solodkin Email: solodkin.vs@phystech.su}, Michael Ermoshin\textsuperscript{a}, Roman Gavrilenko\textsuperscript{a}, Aleksandr Beznosikov\textsuperscript{a}}
\affil{\textsuperscript{a}Basic Research of Artificial Intelligence Laboratory (BRAIn Lab)}
}

\maketitle

\begin{abstract}
In this paper, we present a novel stochastic method for solving variational inequalities (VI) in the context of Markovian noise. By leveraging Extragradient technique, we can productively solve VI optimization problems characterized by Markovian dynamics. We demonstrate the efficacy of proposed method through rigorous theoretical analysis, proving convergence under quite mild assumptions of $L$-Lipschitzness, strong monotonicity of the operator and boundness of the noise only at the optimum. In order to gain further insight into the nature of Markov processes, we conduct the experiments to investigate the impact of the mixing time parameter on the convergence of the algorithm.
\end{abstract}

\begin{keywords}
stochastic optimization; variational inequalities; Extragradient; Markovian noise
\end{keywords}

\section{Introduction}
Stochastic gradient methods are crucial for solving a wide range of optimization problems, with various applications in machine learning \cite{Goodfellow2014, Goodfellow2016}, including areas ranging from traditional empirical risk minimization \cite{vapnik} to modern reinforcement learning \cite{tomar2021mirrordescentpolicyoptimization, Schulman2015, lan2022policymirrordescentreinforcement}. The majority of minimization problems in machine learning have a stochastic structure, and are typically addressed through SGD-like approaches \cite{Cotter2011, Vaswani2019, Taylor2019, Aybat2019, gorbunov2019unified, Woodworth2021}. While such methods have been the focus of considerable research, most of the results surrounding the analysis of these methods rely heavily on the standard assumption of noise independence. However, this assumption become unrealistic in many practical scenarios. For instance, in distributed optimization, dependencies arise due to the communication delays and synchronization problems \cite{Lopes2007, Dimakis2010, Mao2020}. Similarly, in reinforcement learning \cite{Bhandari2018, Srikant2019, Durmus2021}, data is generated by interacting with an environment, leading to highly-correlated dependencies. These observations underscore the necessity for the development of novel methodologies that can operate effectively in the presence of non-i.i.d. stochasticity.

At present, the research on stochastic methods with Markovian noise in minimization problems is less developed in comparison to research on methods with independent noise. This inconsistency can be attributed to several factors. Notably, methods incorporating Markovian noise often present more intricate mathematical challenges \cite{Duchi2012, Beznosikov2023MarkovianNoise, Even2023}, as the dependencies between observations complicate the proof of fundamental algorithmic properties. In contrast, methods assuming independent noise generally have a more straightforward structure, facilitating their theoretical analysis (see \cite{gorbunov2019unified} and references therein). Nevertheless, the investigation of Markovian noise in these methods is a rapidly evolving field, with a growing body of literature addressing the challenges associated with minimizing objective functions under such conditions \cite{Wang2022, karimi2019nonasymptoticanalysisbiasedstochastic, Doan2023}.

The majority of previous works utilizing Markovian stochastics have focused on the minimization problem. However, from a practical point of view, more generalized frameworks like variational inequalities (VI) are also of significant interest. VIs provide a unified formulation for a broad class of problems, encapsulating a wide range of applications from game theory to traffic flow analysis \cite{Scutari2010, Jofre2007}. In particular, examples include economic market modeling, in which firms engage in competition and the objective is to identify a state of market balance; network routing, in which optimal paths are determined in the presence of competing flows; and auction theory, in which bidders strategies must converge to an equilibrium. Furthermore, solving saddle-point problems, which constitute a significant subset of variational inequality challenges, is a crucial aspect of training Generative Adversarial Networks (GANs) \cite{Gidel2018, Mertikopoulos2018, Chavdarova2019}. These problems often involve optimizing a min-max objective function, wherein the generator and discriminator assume adversarial roles. The theoretical foundation provided by VI frameworks is vital for ensuring convergence and stability in such adversarial settings. This approach can be effectively used not only in GANs training \cite{Liang2019}, but also in other applications where equilibrium conditions and competitive interactions prevail. As such, developing robust stochastic methods for VIs \cite{Juditsky2011, Mishchenko2020a}, especially in the presence of dependent noise, is an active area of research. These methods aim to extend the robustness and applicability of classical techniques to more complex and realistic settings, where noise dependencies are inherent and unavoidable.

In light of these insights, it becomes evident that there is a pressing need to develop a more practically applicable method. The objective of this paper is to incorporate the Markovian dynamics into the Extragradient algorithm \cite{Korpelevich1977}, thereby eliminating the limitations of traditional assumptions and expanding the scope of practical applications.
\subsection{Related Work}
Deterministic methods for solving VIs with a Lipschitz operator made significant progress with the advent of Extragradient method \cite{Korpelevich1977}. This classic method involves a two-step iterative process: first, an intermediate point is calculated using the current gradient operator, and then the actual update is performed by using operator at the intermediate point. This additional step is a meaningful improvement in the stability of convergence of the method, ultimately ensuring a more reliable optimization process. By introducing this, the method achieves greater robustness and improved performance in finding solutions. One of a more sophisticated modification of Extragradient method is Extrapolation From The Past \cite{Popov1980}. This single-call method calculates the operator only once per iteration, and the distinction lies in the definition of the intermediate point, which employs the operator in the previous intermediate point, not in the current, leading to halving the gradient calculations. Another significant approach is the Mirror-Prox algorithm \cite{Nemirovski2004}, which utilizes Bregman divergence to improve the optimization process. This idea adapts to the underlying geometry of the problem by employing Bregman divergence, permitting the use of non-Euclidean steps. By respecting the curvature of the solution space, it facilitates more efficient and stable convergence in complex, high-dimensional settings.

There were also significant developments in stochastic methods for solving variational inequalities. For instance, \cite{Juditsky2011} explored a stochastic version of the Mirror-Prox method, examining a scenario with constrained noise variance. This work served as the foundation for subsequent development in this field. To avoid the bounded variance assumption, \cite{Mishchenko2020a} proposed Revisiting Stochastic Extragradient, another stochastic modification of the Extragradient algorithm with the same randomness.
In case of less general settings, e.g. finite sum setup, the classical variance reduction technique is applicable. This is exemplified by the implementation of variance reduction in the context of variational inequalities, as demonstrated in the work \cite{Alacaoglu2022}, where variance reduction modifications of both classical Extragradient \cite{Korpelevich1977} and Mirror-Prox \cite{Juditsky2011} were presented. Prior to mentioned work \cite{Alacaoglu2022}, efforts were also made to adapt the same technique, as evidenced by \cite{Chavdarova2019, Yang2020}. Nevertheless, the majority of these results are predicated on the assumption of independent noise \cite{Palaniappan2016, Chavdarova2019, beznosikov2020distributed, Yang2020, Alacaoglu2022, Beznosikov2023, pichugin2023optimal, pichugin2024method}. 

Markovian noise, where the next iteration becomes dependent on the previous one, models real-world scenarios more accurately than i.i.d. noise scenario, capturing temporal dependencies and sequential decision processes. For instance, the work \cite{Beznosikov2023MarkovianNoise} provided a comprehensive framework for analyzing first-order gradient methods in stochastic minimization and VIs involving Markovian noise. Their approach achieved optimal linear dependence on the mixing time of the noise sequence in the stochastic term through a randomized batching scheme based on the multilevel Monte Carlo method. This technique eliminated several limiting assumptions from previous research, such as the need for a bounded domain and uniformly bounded stochastic operator. Notably, their extension to VIs under Markovian noise represented a significant contribution, providing matching lower bounds for oracle complexity in VI problems. In another paper \cite{Wang2022}, stochastic gradient-based Markov chain methods (MC-SGM) were analyzed for the min-max problem. The authors used algorithmic stability within the framework of statistical learning theory for both smooth and nonsmooth cases. However, the results in this paper are rather sparse and a continuation of this topic is needed.

\subsection{Our Contributions} 
The main contributions of this paper are the following:

$\bullet$ \textbf{Novel Stochastic Look at VI}\\
We present a novel view on variational inequalities through the lens of Markovian stochasticity. This approach differs from traditional methods such as stochastic Extragradient and its variations, which typically impose restrictions on noise independence \cite{Chavdarova2019, Palaniappan2016, Yang2020, Alacaoglu2022, Mishchenko2020a}. Among the works dealing with the Markovian noise, our paper is distinguishable by a mild assumption on the noise variables bounding the operator only at the optimum. However, we necessitate the Lipschitzness and strong monotonicity of the operator for all realizations of a random variable. On the other hand, analogous assumptions are present in the work \cite{Mishchenko2020a}, wherein the noise is considered to be independent.

$\bullet$ \textbf{Rates of Convergence}\\
We provide sharp rates of convergence, being able to avoid the presence of mixing time in the deterministic term of the rate.
The stochastic term is quadratic with respect to the mixing time of the underlying Markov chain, which is competitive with the exiting foundations in the literature. 

$\bullet$ \textbf{Experimental Analysis of Mixing Time Influence}\\
In order to determine the actual convergence rate of the specified method and to verify the feasibility of the theoretical assessment in practice, we provide the numerical experiments. The aim of them is to demonstrate the effect of changes in the mixing time parameter on the convergence process.


\subsection{Notation}
We use $\langle x, y\rangle := \sum_{i = 1}^d x_i y_i$ to denote standard inner product of vectors $x, y \in \mathbb{R}^d$. We introduce $l_2$-norm of vector $x \in \mathbb{R}^d$ as $\|x\| := \sqrt{\langle x, x\rangle}$. Let $(\mathsf{M},\mathsf{d}_{\mathsf{M}})$ be a complete separable metric space endowed with its Borel $\sigma$-field $\mathcal{M}$.
{We denote by $(\mathsf{M}^{\ensuremath{\mathbb{N}}}, \mathcal{M}^{\otimes \ensuremath{\mathbb{N}}})$ the corresponding canonical process. Consider the Markov kernel defined on $\mathsf{M} \times \mathcal{M}$, and denote by $\mathbb{P}_{\theta}$ and $\mathbb{E}_{\theta}$ the corresponding probability distribution and the expected value with initial distribution $\theta$. Let $(M_k)_{k \in \ensuremath{\mathbb{N}}}$ be the corresponding canonical process. For $\theta = \delta_{m}$, $m \in \mathsf{M}$, we simply write $\mathbb{P}_{m}$ and $\mathbb{E}_{m}$ instead of $\mathbb{P}_{\delta_{m}}$ and $\mathbb{E}_{\delta_{m}}$.}
For $z^{0},\ldots,z^{t}$ being the iterates of any algorithm, we denote $\mathcal{F}_{t} = \sigma(z^{j}, j \leq t)$ and write $\mathbb{E}_{t}$ to denote conditional expectation $\mathbb{E}[\cdot | \mathcal{F}_{t}]$.


\section{Technical preliminaries}
In this paper, we are interested in solving the optimization problem of the form
\begin{equation}
    \label{eq:main}
    \text{Find } \, z^* \in \mathbb{R}^d \, \text{ such that } \forall z \in \mathbb{R}^d \hookrightarrow \langle F(z^*), z - z^* \rangle \geq 0,
\end{equation}

where $F(z) := \mathbb{E}_{\xi \sim \pi} F(z, \xi)$ is approximated by the stochastic oracle $F(z, \xi)$ and $\{ \xi^t \}_{t=0}^{\infty}$ is a stationary Markov chain with a unique invariant distribution $\pi$, defined on $\mathcal{M}$. {Since the feasible set is Euclidean space $\mathbb{R}^d$, the problem \eqref{eq:main} is equivalent to finding $z^*$ such that $F(z^*) = 0$.} The aforementioned formulation of the VI problem is a classical approach in optimization methods.

We begin by introducing two fundamental constraints on the operator $F(\cdot, \xi)$.

\begin{assumption}[$L(\xi)$-Lipschitzness]
\label{as:lip}
    The operator $F(\cdot, \xi)$ is $L(\xi)$-Lipschitz, i.e., there exists $L(\xi) > 0$ such that the following inequality holds for all $z', z'' \in \mathbb{R}^d$:
    \begin{equation*}
       \| F (z', \xi) - F (z'', \xi) \| \leq L(\xi)  \| z' - z''\|.
    \end{equation*}
    We also define $L := \sup\limits_{\xi \in \mathcal{M}} L(\xi) < +\infty.$
\end{assumption}

\begin{assumption}[$\mu(\xi)$-strong monotonicity]
    \label{as:stronglymonotone}
        The operator $F(\cdot, \xi)$ is $\mu(\xi)$-strongly monotone, i.e., there exists $\mu(\xi) > 0$ such that the following inequality holds for all $z', z'' \in \mathbb{R}^d$:
        \begin{equation*}
            \langle F(z', \xi)-F(z'', \xi) , z'-z'' \rangle\geq \mu(\xi) \|z'-z''\|^2.
        \end{equation*} 
        We also define $\mu := \inf\limits_{\xi \in \mathcal{M}} \mu(\xi) > 0.$
\end{assumption}

{Assumptions \ref{as:lip} and \ref{as:stronglymonotone} imply that the averaged operator $F$ is also $L$-Lipschitz and $\mu$-strongly monotone. Indeed, Lipschitzness holds because norm of an expectation is bounded by expectation of the norm, and strong monotonicity survives because the inner product is linear. Hence, the problem \eqref{eq:main} admits a unique solution $z^*$ \cite{ref1}.}

The next assumption implies a uniform stochastic boundedness of gradient operator at the optimum point.

\begin{assumption}
    \label{as:optimumlimited}
        The oracle $F(z, \cdot)$ is bounded at the optimum $z^*$, i.e., there exists $\sigma_* > 0$ such that the following inequality holds for all $\xi \in \mathcal{M}$:
        \begin{equation*}
            \| F(z^*, \xi) \| \leq \sigma_*.
        \end{equation*}
\end{assumption}

To the best of our knowledge, this constraint seems to be unpopular in the majority of existing literature. Most of the existing work assumes either uniform boundness or boundness of the moments of distribution.

Now we introduce an important assumption related to the theory of Markov processes.
\begin{assumption}[{uniform geometric ergodicity}]
    \label{as:var}
     {Let $\{\xi^t\}_{t=0}^{\infty}$ be a stationary Markov chain on $(\mathsf{M},\mathcal{M})$ with Markov kernel $\mathcal{Q}$ and a unique invariant distribution $\pi$. Assume that $\{\xi^t\}_{t=0}^{\infty}$ is uniformly geometrically ergodic with mixing time $\tau_{\mathrm{mix}}$, i.e. for all $t>0$}
    \begin{equation*}
        {\sup_{m,m' \in \mathsf{M}}\| \mathcal{Q}^t(m, \cdot) - \mathcal{Q}^t(m', \cdot) \|_{\mathrm{TV}} \leq 2^{-t / \tau_{\mathrm{mix}}}.}
    \end{equation*}
\end{assumption}
{By convexity of the total variation norm, stationarity of $\pi$ and Jensen's inequality, this assumption implies}
\begin{equation*}
    {\sup_{m \in \mathsf{M}}\| \mathcal{Q}^t(m, \cdot) - \pi \|_{\mathrm{TV}} \leq 2^{-t / \tau_{\mathrm{mix}}}.}
\end{equation*}
This kind of assumption is standard concerning the literature on Markovian noise \cite{Chavdarova2019, Palaniappan2016, Yang2020, Alacaoglu2022}. It is quite obvious from definition that the mixing time $\tau_{mix}$ is simply the number of steps of the Markov chain required for the distribution of the current state to be sufficiently close to the stationary probability $\pi$.

\section{Main results}

Now, we are ready to introduce our Algorthm \ref{alg:EGM}. Following the idea of utilizing the intermediate step information, we incorporate the Markovian stochasticity into the classical \texttt{Extrapolated Gradient Method} \cite{Korpelevich1977}. An extrapolation step aims at stabilizing the convergence process, while the Markovian approach pursues to utilize the natural idea of using the previous time iterations. The aforestated algorithm outlines the steps involved.

\begin{algorithm}[h]
   \caption{\texttt{Extrapolated Gradient Method with Markovian noise}}
   \label{alg:EGM}
        \begin{algorithmic}[1]
           \State {\bfseries Parameters:} step size $\gamma > 0$, number of iterations $T$
           \State {\bfseries Initialization:} choose $z^0 \in \mathcal{Z}$
           \For{$t = 0$ {\bfseries to} $T$}
           \State $z^{t+\frac{1}{2}} = z^t - \gamma F(z^t, \xi^t)$
           \State $z^{t+1} = z^t - \gamma F(z^{t+\frac{1}{2}}, \xi^{t})$
           \EndFor
        \end{algorithmic}
\end{algorithm}
In order to prove the main convergence theorem of Algorithm \ref{alg:EGM}, we first establish three important lemmas. One of them is responsible for handling the deterministic step of the proof, while the other two aim to circumvent the difficulties introduced by the Markovian nature of stochasticity. Let us start with the classical \cite{Gidel2018, Mishchenko2020a, Hsieh2019} descent lemma:

\begin{lemma}
\label{lem:1}
    Let Assumptions \ref{as:lip}, \ref{as:stronglymonotone} be satisfied. Then for the iterates $\{z_t\}_{t\geq0}$ of Algorithm \ref{alg:EGM} it holds that:
    \begin{align*}
        \| z^{t+1} - z^*\|^2 &\leq \|z^t - z^*\|^2 - 2 \gamma \mu \| z^{t+\frac{1}{2}} - z^* \|^2 - 2 \gamma \langle F(z^*, \xi^t), z^{t + \frac{1}{2}} - z^* \rangle \\&\quad+ \gamma^2 L^2 \| z^t - z^{t+\frac{1}{2}} \|^2 - \| z^t - z^{t+\frac{1}{2}}\|^2.
    \end{align*}
\end{lemma}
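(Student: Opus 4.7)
The plan is to carry out a direct expansion of $\|z^{t+1} - z^*\|^2$ starting from the recursion $z^{t+1} = z^t - \gamma F(z^{t+\frac{1}{2}}, \xi^t)$, then to split the cross term so that strong monotonicity at $z^{t+\frac{1}{2}}$ and Lipschitzness between $z^t$ and $z^{t+\frac{1}{2}}$ can each be applied on the appropriate piece.

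Concretely, I would first write
\[
\|z^{t+1}-z^*\|^2 = \|z^t-z^*\|^2 - 2\gamma \langle F(z^{t+\frac{1}{2}},\xi^t), z^t - z^*\rangle + \gamma^2\|F(z^{t+\frac{1}{2}},\xi^t)\|^2,
\]
and then insert the splitting $z^t - z^* = (z^t - z^{t+\frac{1}{2}}) + (z^{t+\frac{1}{2}} - z^*)$ inside the inner product. This produces two inner-product terms which I will treat separately.

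For the first inner product, I would use the defining relation $z^t - z^{t+\frac{1}{2}} = \gamma F(z^t,\xi^t)$ to rewrite $-2\gamma\langle F(z^{t+\frac{1}{2}},\xi^t), z^t - z^{t+\frac{1}{2}}\rangle = -2\gamma^2 \langle F(z^t,\xi^t), F(z^{t+\frac{1}{2}},\xi^t)\rangle$, and combine it with the leftover $\gamma^2\|F(z^{t+\frac{1}{2}},\xi^t)\|^2$ via the algebraic identity $-2\langle a,b\rangle + \|b\|^2 = -\|a\|^2 + \|a-b\|^2$ with $a = \gamma F(z^t,\xi^t)$ and $b = \gamma F(z^{t+\frac{1}{2}},\xi^t)$. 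This yields exactly $-\|z^t - z^{t+\frac{1}{2}}\|^2 + \gamma^2\|F(z^t,\xi^t) - F(z^{t+\frac{1}{2}},\xi^t)\|^2$, and Assumption \ref{as:lip} upper-bounds the second summand by $\gamma^2 L^2\|z^t - z^{t+\frac{1}{2}}\|^2$, producing the two final terms of the claimed inequality.

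For the second inner product, I would further decompose $F(z^{t+\frac{1}{2}},\xi^t) = F(z^*,\xi^t) + \bigl(F(z^{t+\frac{1}{2}},\xi^t) - F(z^*,\xi^t)\bigr)$, so that
\[
-2\gamma \langle F(z^{t+\frac{1}{2}},\xi^t), z^{t+\frac{1}{2}} - z^*\rangle = -2\gamma\langle F(z^*,\xi^t), z^{t+\frac{1}{2}} - z^*\rangle - 2\gamma\langle F(z^{t+\frac{1}{2}},\xi^t) - F(z^*,\xi^t), z^{t+\frac{1}{2}} - z^*\rangle,
\]
and Assumption \ref{as:stronglymonotone} bounds the second term above by $-2\gamma\mu\|z^{t+\frac{1}{2}} - z^*\|^2$, exactly matching the remaining terms. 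Summing the two estimates gives the desired inequality. I don't anticipate any real obstacle here; the only subtlety is recognising the complete-the-square identity that converts the Lipschitz step increment $z^t - z^{t+\frac{1}{2}}$ into the negative $-\|z^t - z^{t+\frac{1}{2}}\|^2$ term, which is the mechanism responsible for the characteristic Extragradient cancellation that later allows a step size of order $1/L$.
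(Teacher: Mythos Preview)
Your proposal is correct and follows essentially the same route as the paper: expand $\|z^{t+1}-z^*\|^2$, split $z^t-z^*$ through $z^{t+\frac{1}{2}}$, use $z^t-z^{t+\frac{1}{2}}=\gamma F(z^t,\xi^t)$ together with the complete-the-square identity to produce $-\|z^t-z^{t+\frac{1}{2}}\|^2+\gamma^2\|F(z^t,\xi^t)-F(z^{t+\frac{1}{2}},\xi^t)\|^2$, then apply Assumption~\ref{as:lip} and Assumption~\ref{as:stronglymonotone} on the respective pieces. The only cosmetic difference is that the paper writes the identity $\|b\|^2=\|b-a\|^2-\|a\|^2+2\langle a,b\rangle$ explicitly before cancelling, whereas you invoke it in the compact form $-2\langle a,b\rangle+\|b\|^2=-\|a\|^2+\|a-b\|^2$.
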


\begin{proof}
    We start by using line 5 of Algorithm \ref{alg:EGM}:
    \begin{align*}
        \| z^{t + 1} - z^*\|^2 & = \| z^t - \gamma F(z^{t+\frac{1}{2}}, \xi^t) - z^* \|^2 \\
        & = \| z^t - z^*\|^2 - 2 \gamma \langle z^t - z^*, F(z^{t+\frac{1}{2}}, \xi^t) \rangle + \| \gamma F(z^{t+\frac{1}{2}}, \xi^t)\|^2 \\
        & = \| z^t - z^*\|^2 - 2 \gamma \langle z^{t+\frac{1}{2}} - z^*, F(z^{t+\frac{1}{2}}, \xi^t) \rangle \\
        & \quad+ 2 \gamma \langle z^{t+\frac{1}{2}} - z^t, F(z^{t+\frac{1}{2}}, \xi^t) \rangle +
        \| \gamma F(z^{t+\frac{1}{2}}, \xi^t)\|^2.
    \end{align*}
    Note that
    \begin{align*}
       \| \gamma F(z^{t+\frac{1}{2}}, \xi^t)\|^2 &= \| \gamma F(z^{t+\frac{1}{2}}, \xi^t) - \gamma F(z^t, \xi^t) \|^2 - \| \gamma F(z^t, \xi^t) \|^2 \\&\quad+ 2 \langle \gamma F(z^{t+\frac{1}{2}}, \xi^t), \gamma F(z^t, \xi^t) \rangle.
    \end{align*}
    Using this and the fact $z^{t+\frac{1}{2}} - z^t = - \gamma F(z^t, \xi^t)$, we have
    \begin{align*}
        \| z^{t + 1} - z^*\|^2  &= \| z^t - z^*\|^2 - 2 \gamma \langle F(z^{t+\frac{1}{2}}, \xi^t), z^{t+\frac{1}{2}} - z^* \rangle \\
        &\quad- 2 \langle \gamma F(z^t, \xi^t), \gamma F(z^{t+\frac{1}{2}}, \xi^t) \rangle \\
        & \quad+ \| \gamma F(z^{t+\frac{1}{2}}, \xi^t) - \gamma F(z^t, \xi^t) \|^2 - \| \gamma F(z^t, \xi^t) \|^2\\&\quad + 2 \langle \gamma F(z^{t+\frac{1}{2}}, \xi^t), \gamma F(z^t, \xi^t) \rangle \\
        & = \| z^t - z^*\|^2 - 2 \gamma \langle F(z^{t+\frac{1}{2}}, \xi^t), z^{t+\frac{1}{2}} - z^* \rangle - \| z^{t+\frac{1}{2}} - z^t \|^2 \\
        & \quad+ \gamma^2 \| F(z^{t+\frac{1}{2}}, \xi^t) - F(z^t, \xi^t) \|^2.
    \end{align*}
    Next, we use Assumption \ref{as:lip}:
    \begin{align}
        \label{lem1:tech}
        \| z^{t + 1} - z^*\|^2 &\leq \| z^t - z^*\|^2 - 2 \gamma \langle F(z^{t+\frac{1}{2}}, \xi^t), z^{t+\frac{1}{2}} - z^* \rangle \\
        &
        \quad+ \gamma^2 L^2 \| z^{t+\frac{1}{2}} - z^t \|^2 - \| z^{t+\frac{1}{2}} - z^t \|^2.\notag
    \end{align}
    Applying Assumption \ref{as:stronglymonotone} to the second term, we get 
    \begin{align*}
        \langle F(z^{t+\frac{1}{2}}, \xi^t), z^{t+\frac{1}{2}} - z^* \rangle & = \langle F(z^{t+\frac{1}{2}}, \xi^t) - F(z^*, \xi^t), z^{t+\frac{1}{2}} - z^* \rangle \\
        &\quad+ \langle F(z^*, \xi^t), z^{t+\frac{1}{2}} - z^* \rangle \\
        & \geq \mu \| z^{t+\frac{1}{2}} - z^* \|^2 + \langle F(z^*, \xi^t), z^{t+\frac{1}{2}} - z^* \rangle
    \end{align*}
    Substituting this into \eqref{lem1:tech} completes the proof.
\end{proof}
One of the effective methods for addressing Markovian stochasticity is the technique of stepping back by $\mathcal{T} > \tau_{mix}$ steps. In contrast to the i.i.d. case, in which the term of the form $\mathbb{E}\langle F(z^*, \xi^t), z^{t + \frac{1}{2}} - z^*\rangle$ equals to zero, we here need to handle it in the following way:           
\begin{align}
    \label{lemms23}
    \langle F(z^*, \xi^t), z^{t+\frac{1}{2}} - z^* \rangle & = 
    \langle F(z^*, \xi^t), z^{t+\frac{1}{2}} - z^{t+\frac{1}{2}-\mathcal{T}}\rangle + \langle F(z^*, \xi^t), z^{t+\frac{1}{2}-\mathcal{T}} - z^* \rangle,
\end{align}
    taking what is referred to as the step back. The first term in \eqref{lemms23} is treated with the help of Cauchy-Schwarz inequality and the lemma below.  
\begin{lemma}
\label{lem:2}
    Let $\mathcal{T} \in \mathbb{N}$ be a fixed number and let $\{z_t\}_{t\geq0}$ be the iterates of Algorithm \ref{alg:EGM}. Then, for all $t \geq \mathcal{T}$ it holds that
    \begin{align*}
        \| z^{t+\frac{1}{2}} - z^{t+\frac{1}{2}-\mathcal{T}} \| \leq \sum\limits_{k=0}^{\mathcal{T}} [1 + \gamma L] \| z^{t+\frac{1}{2}-k} - z^{t-k} \|.
    \end{align*}
\end{lemma}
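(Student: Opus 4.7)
The plan is to decompose $z^{t+\frac{1}{2}} - z^{(t-\mathcal{T})+\frac{1}{2}}$ by telescoping over half-integer indices, express each block in terms of the ``target'' differences $z^{(t-k)+\frac{1}{2}} - z^{t-k}$, and control the residual pieces using the Lipschitz assumption (Assumption~\ref{as:lip}). The condition $t \geq \mathcal{T}$ is only used to ensure that all referenced iterates exist.

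First I would write the exact telescoping
\begin{equation*}
    z^{t+\frac{1}{2}} - z^{(t-\mathcal{T})+\frac{1}{2}} \;=\; \sum_{k=0}^{\mathcal{T}-1} \bigl( z^{(t-k)+\frac{1}{2}} - z^{(t-k-1)+\frac{1}{2}} \bigr),
\end{equation*}
and in every summand insert the integer iterate $z^{t-k}$, splitting it into the desired quantity $z^{(t-k)+\frac{1}{2}} - z^{t-k}$ plus an auxiliary piece $z^{t-k} - z^{(t-k-1)+\frac{1}{2}}$.

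The heart of the argument is to handle this auxiliary piece. Combining lines~4 and~5 of Algorithm~\ref{alg:EGM} (with $s := t-k-1$) gives
\begin{equation*}
    z^{t-k} - z^{(t-k-1)+\frac{1}{2}} \;=\; \gamma\bigl[\,F(z^{t-k-1}, \xi^{t-k-1}) - F(z^{(t-k-1)+\frac{1}{2}}, \xi^{t-k-1})\,\bigr],
\end{equation*}
so Assumption~\ref{as:lip} immediately yields $\|z^{t-k} - z^{(t-k-1)+\frac{1}{2}}\| \leq \gamma L \, \|z^{(t-k-1)+\frac{1}{2}} - z^{t-k-1}\|$, which is again a quantity of the target form, merely shifted by one in the summation index.

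Applying the triangle inequality, summing over $k$, and extending the two resulting index-shifted sums (with coefficients $1$ and $\gamma L$ respectively) to the common range $k = 0,\dots,\mathcal{T}$ merges them into a single sum carrying the factor $1 + \gamma L$, as claimed. The only real obstacle is the index-shift bookkeeping: rather than keeping the bound tight, one must harmlessly enlarge each partial sum to $\{0,\dots,\mathcal{T}\}$ so that the two contributions collapse into the clean constant $(1+\gamma L)$ in front of the single sum. Everything else is routine manipulation.
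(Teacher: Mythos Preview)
Your proposal is correct and is essentially the same argument as the paper's: both use the algorithm updates to write $z^{t-k}-z^{(t-k-1)+\frac{1}{2}}=\gamma\bigl[F(z^{t-k-1},\xi^{t-k-1})-F(z^{(t-k-1)+\frac{1}{2}},\xi^{t-k-1})\bigr]$, apply Assumption~\ref{as:lip}, and then enlarge the index range to merge the coefficients $1$ and $\gamma L$ into $1+\gamma L$. The only cosmetic difference is that the paper peels off one step at a time by recursion on $\|z^{t-\frac{1}{2}}-z^{t+\frac{1}{2}-\mathcal{T}}\|$, whereas you write the full telescoping sum at once; the resulting intermediate bound (coefficient $1$ at $k=0$, $1+\gamma L$ for $1\le k\le\mathcal{T}-1$, $\gamma L$ at $k=\mathcal{T}$) is identical.
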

\begin{proof}
    We start with the triangle inequality:
    \begin{align*}
        \| z^{t+\frac{1}{2}}  - z^{t+\frac{1}{2}-\mathcal{T}} \| 
        & \leq \| z^{t+\frac{1}{2}} - z^t \| + \| [ z^t - z^{t-1} ] - [ z^{t-\frac{1}{2}} - z^{t-1} ] \| \\
        &\quad+ \| z^{t-\frac{1}{2}} - z^{t+\frac{1}{2}-\mathcal{T}} \|.
    \end{align*}
    Using lines 4 and 5 of Algorithm \ref{alg:EGM}, we obtain:
    \begin{align*}
        \| z^{t+\frac{1}{2}}  - z^{t+\frac{1}{2}-\mathcal{T}} \| 
        & \leq \| z^{t+\frac{1}{2}} - z^t \| + \gamma \| F(z^{t-\frac{1}{2}}, \xi^{t-1}) - F(z^{t-1}, \xi^{t-1}) \| \\
        &\quad+ \| z^{t-\frac{1}{2}} - z^{t+\frac{1}{2}-\mathcal{T}} \| \\
        & \leq \| z^{t+\frac{1}{2}} - z^t \| + \gamma L \| z^{t-\frac{1}{2}} - z^{t-1} \| + \| z^{t-\frac{1}{2}} - z^{t+\frac{1}{2}-\mathcal{T}} \|,
    \end{align*}
    where in the last inequality follows from Assumption \ref{as:lip}. The same steps can now be applied to the $\| z^{t-\frac{1}{2}} - z^{t+\frac{1}{2}-\mathcal{T}} \|$ term. Finally, after performing the recursion, we obtain 
    \begin{align*}
        \| z^{t+\frac{1}{2}} - z^{t+\frac{1}{2}-\mathcal{T}} \| &\leq \| z^{t+\frac{1}{2}} - z^t \| + \sum\limits_{k=1}^{\mathcal{T} - 1} [1 + \gamma L] \| z^{t+\frac{1}{2}-k} - z^{t-k} \| \\
        &\quad+ \gamma L \| z^{t+\frac{1}{2} - \mathcal{T}} - z^{t-\mathcal{T}} \|.
    \end{align*}
    This concludes the proof.
\end{proof}
The second term in \eqref{lemms23} is resolved with the following lemma. 
\begin{lemma}
\label{lem:3}
    {For any $\mathcal{T} > \tau_{mix}$, any $t > \mathcal{T}$, and any $z^{t+\frac{1}{2}-\mathcal{T}} \in \mathbb{R}^d$ measurable with respect to $\mathcal{F}_{t+\frac{1}{2}-\mathcal{T}}$, it holds that}
\[
{\mathbb{E}\left[ \left\langle F(z^*, \xi^t)-F(z^*), z^*-z^{t + \frac{1}{2} - \mathcal{T}}  \right\rangle \right] \leq 2\sigma_* \cdot 2^{-\mathcal{T}/\tau_{\mathrm{mix}}} \, \mathbb{E} \left[\|z^*-z^{t + \frac{1}{2} - \mathcal{T}}\| \right].}
\]
\end{lemma}
\begin{proof}
    {Let $v := z^*-z^{t + \frac{1}{2} - \mathcal{T}}$, which is $\mathcal{F}_{t+\frac{1}{2}-\mathcal{T}}$-measurable. By the tower property,}
    \begin{equation*}
    {\mathbb{E}\left[ \left\langle F(z^*, \xi^t)-F(z^*), v \right\rangle \right] = \mathbb{E}\left[ \left\langle \mathbb{E}_{t+\frac{1}{2}-\mathcal{T}}\left[F(z^*, \xi^t)-F(z^*)\right], v \right\rangle \right].}
    \end{equation*}
    {Conditioned on $\mathcal{F}_{t+\frac{1}{2}-\mathcal{T}}$, the law of $\xi^t$ is $\mathcal{Q}^{\mathcal{T}}(\xi^{t-\mathcal{T}},\cdot)$. Therefore,}
    \begin{equation*}
    {\mathbb{E}_{t+\frac{1}{2}-\mathcal{T}}\left[F(z^*, \xi^t)-F(z^*)\right] = \int_{\mathsf M} F(z^*,m)\,\big(\mathcal{Q}^{\mathcal{T}}(\xi^{t-\mathcal{T}},dm)-\pi(dm)\big).}
    \end{equation*}
    {Using the dual characterization of total variation distance, for any bounded measurable function $g$ one has}
    \begin{equation*}
    {\left\|\int g(m)\,(\nu-\nu')(dm)\right\| \leq 2\|g\|_{\infty}\,\|\nu-\nu'\|_{\mathrm{TV}}.}
    \end{equation*}
    {Applying this with $g(m)=F(z^*,m)$ and using Assumptions \ref{as:optimumlimited} and \ref{as:var}, we obtain}
    \begin{equation*}
    {\left\|\mathbb{E}_{t+\frac{1}{2}-\mathcal{T}}\left[F(z^*, \xi^t)-F(z^*)\right]\right\| \leq 2\sigma_*\,\|\mathcal{Q}^{\mathcal{T}}(\xi^{t-\mathcal{T}},\cdot)-\pi\|_{\mathrm{TV}} \leq 2\sigma_* \cdot 2^{-\mathcal{T}/\tau_{\mathrm{mix}}}.}
    \end{equation*}
    {Finally, by Cauchy--Schwarz inequality,}
    \begin{equation*}
    {\mathbb{E}\left[ \left\langle F(z^*, \xi^t)-F(z^*), z^*-z^{t + \frac{1}{2} - \mathcal{T}} \right\rangle \right] \leq 2\sigma_* \cdot 2^{-\mathcal{T}/\tau_{\mathrm{mix}}}\,\mathbb{E}\|z^*-z^{t + \frac{1}{2} - \mathcal{T}}\|,}
    \end{equation*}
    {which proves the claim.}
\end{proof}
Now we are ready to prove the convergence of Algorithm \ref{alg:EGM}. The proof scheme is to utilize Lemmas \ref{lem:1} and \ref{lem:2} and Cauchy-Schwarz inequalities, proceed by taking the full mathematical expectation on both sides of the resulting expressions and, with the aid of Lemma \ref{lem:3}, sum these expectations from some index $\tau > \tau_{mix}$ up to iteration $T - 1$. Through these steps and by imposing specific conditions on the parameters $\gamma$, we derive the final convergence result.
\begin{theorem}[Convergence of Algorithm \ref{alg:EGM}]
\label{theorem:1}
Let Assumptions \ref{as:lip}, \ref{as:stronglymonotone}, \ref{as:optimumlimited}, \ref{as:var} be satisfied. Let the problem \eqref{eq:main} be solved by Algorithm \ref{alg:EGM}. {Than, for all $T \gtrsim \tau_{mix}$ and $0 \leq \gamma \leq \min\{\frac{1}{2L}, \frac{1}{4\mu}\}$, it holds that}
\begin{equation*}
    \mathbb{E} \|z^{T+1} - z^*\|^2 \leq \Bigg(1 - \frac{\mu\gamma}{2}\Bigg)^T\Bigg[\Big(1-\frac{\mu\gamma}{2}\Big)^{-{\tau_{mix}}}\mathbb{E}\|z^{\tau_{mix}} - z^*\|^2 + \Delta_{\tau_{mix}}\Bigg] + \frac{50\gamma{\tau_{mix}}^2}{\mu}\sigma_*^2,
\end{equation*}
where $\Delta_{\tau_{mix}} := \sum\limits_{t=0}^{{\tau_{mix}}-1}\mathbb{E}\Big[ \|z^{t+\frac{1}{2}} - z^t\|^2 + \|z^{t+\frac{1}{2}} - z^*\|^2\Big]$.
\end{theorem}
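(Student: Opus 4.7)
The strategy is to combine Lemmas \ref{lem:1}, \ref{lem:2} and \ref{lem:3} into a one-step recursion of the form $\mathbb{E}\|z^{t+1}-z^*\|^2 \leq (1-\mu\gamma/2)\mathbb{E}\|z^t-z^*\|^2 + (\text{past-iterate corrections}) + O(\gamma\tau^2\sigma_*^2)$, valid for $t \geq \tau$, and then to unroll it from $t=\tau$ to $t=T$. Taking full expectation in Lemma \ref{lem:1} isolates the single problematic quantity $-2\gamma\mathbb{E}\langle F(z^*,\xi^t), z^{t+1/2}-z^*\rangle$. In the i.i.d.\ setting this would simply vanish in expectation; here it must be handled by inserting the step-back decomposition \eqref{lemms23} with $\mathcal{T}=\tau$.

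\textbf{Handling the two pieces of the cross term.} For the lookback piece $-2\gamma\mathbb{E}\langle F(z^*,\xi^t), z^{t+1/2}-z^{t+1/2-\tau}\rangle$, I would apply Cauchy--Schwarz together with Assumption \ref{as:optimumlimited}, followed by Lemma \ref{lem:2} and the pointwise estimate $\|z^{s+1/2}-z^s\| = \gamma\|F(z^s,\xi^s)\| \leq \gamma L\|z^s-z^*\|+\gamma\sigma_*$. Using $\gamma L \leq 1/2$ keeps the factor $(1+\gamma L)$ from blowing up, so only a linear-in-$\tau$ multiplier survives; Young's inequality on each term of the resulting sum, with weights of order $1/\tau$, then produces a contribution of the shape
\[
A\,\gamma\mu\sum_{k=0}^{\tau}\mathbb{E}\|z^{t-k}-z^*\|^2 \;+\; B\,\gamma\tau^2\sigma_*^2/\mu
\]
with $A$ tunable. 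For the step-back piece, Lemma \ref{lem:3} gives $-2\gamma\mathbb{E}\langle F(z^*,\xi^t), z^{t+1/2-\tau}-z^*\rangle \leq 2\gamma\varepsilon\sigma_*\mathbb{E}\|z^{t+1/2-\tau}-z^*\|$, and one more Young's inequality (this is where $\varepsilon \leq \mu/6$ enters) converts it into $\tfrac{\mu\gamma}{6}\mathbb{E}\|z^{t+1/2-\tau}-z^*\|^2 + 6\gamma\varepsilon^2\sigma_*^2/\mu$.

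\textbf{One-step inequality and unrolling.} Collecting everything, I expect the two negative terms from Lemma \ref{lem:1}, namely $-2\gamma\mu\mathbb{E}\|z^{t+1/2}-z^*\|^2$ and $(\gamma^2L^2-1)\mathbb{E}\|z^{t+1/2}-z^t\|^2 \leq -\tfrac34\mathbb{E}\|z^{t+1/2}-z^t\|^2$, to serve as a reservoir absorbing (i) the $\mathbb{E}\|z^{s+1/2}-z^*\|^2$ and $\mathbb{E}\|z^{s+1/2}-z^s\|^2$ corrections produced by Step 2 after re-indexing the past-iterate sum, and (ii) the isolated $\tfrac{\mu\gamma}{6}\mathbb{E}\|z^{t+1/2-\tau}-z^*\|^2$ from Step 3. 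What remains is $r_{t+1} \leq (1-\mu\gamma/2)\,r_t + C\gamma\tau^2\sigma_*^2$ for $t\geq\tau$. Multiplying by $(1-\mu\gamma/2)^{-(t+1)}$ and summing from $t=\tau$ to $t=T$ telescopes the left-hand side into $(1-\mu\gamma/2)^{-(T+1)}\mathbb{E}\|z^{T+1}-z^*\|^2$, while the geometric series $\sum_{k\geq 0}(1-\mu\gamma/2)^k \leq 2/(\mu\gamma)$ on the noise side produces the advertised $\tfrac{56\gamma\tau^2}{\mu}\sigma_*^2$.

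\textbf{Main obstacle.} The delicate part is the bookkeeping in Step 2: the lookback couples $r_{t+1}$ to $r_{t-k}$ and to $\mathbb{E}\|z^{t-k+1/2}-z^{t-k}\|^2$ for $k=0,\ldots,\tau$, and the iterates $z^0,\ldots,z^{\tau-1}$ lie \emph{outside} the window where the one-step recursion is valid. After unrolling, reordering the resulting double sum will leave behind an unabsorbable boundary contribution from these early iterates, which is exactly what $\Delta_\tau$ records: the factor $6$ and the precise pair $\|z^{t+1/2}-z^t\|^2 + \|z^{t+1/2}-z^*\|^2$ are artifacts of how the reservoir terms break up. Choosing the Young's weights so that the reservoir beats the past-iterate sum while simultaneously yielding the contraction factor $1-\mu\gamma/2$ is precisely what forces $\gamma \leq \min\{1/(2L),1/(4\mu)\}$ and $\varepsilon\leq\min\{\mu/6,\gamma\}$.
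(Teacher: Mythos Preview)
Your high-level architecture matches the paper's proof exactly: start from Lemma~\ref{lem:1}, split the cross term via \eqref{lemms23} with $\mathcal{T}=\tau$, bound the lookback piece with Cauchy--Schwarz, Assumption~\ref{as:optimumlimited} and Lemma~\ref{lem:2}, bound the step-back piece with Lemma~\ref{lem:3} and Young, multiply by $(1-\mu\gamma/2)^{-t}$, sum from $t=\tau$ to $T-1$ and telescope; the unabsorbable early-iterate leftovers become $\Delta_\tau$. Your identification of the ``main obstacle'' and of how the constraints on $\gamma,\varepsilon$ arise is also correct.

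The one place you diverge from the paper --- and where the plan as written would not close --- is the pointwise conversion $\|z^{s+1/2}-z^s\| \le \gamma L\|z^s-z^*\| + \gamma\sigma_*$ in Step~2. The paper never makes this substitution. It keeps the lookback contributions in the form $\|z^{t+1/2-k}-z^{t-k}\|^2$ and absorbs them, after summation and re-indexing, with the large $O(1)$ reservoir $-(1-\gamma^2L^2-\gamma\mu)\,\mathbb{E}\|z^{t+1/2}-z^t\|^2$; this is precisely what the choice $\beta_1 = 1/(27\gamma\tau^2)$ is tuned to do. If instead you convert to $\|z^{t-k}-z^*\|^2$, those corrections require an $O(\gamma\mu)$ reservoir, but the entire $-2\gamma\mu\,\mathbb{E}\|z^{t+1/2}-z^*\|^2$ budget is already committed: one half goes to absorbing the step-back piece (exactly saturated at $\varepsilon=\mu/6$, since re-indexing introduces a factor $p_\tau\le 6$), and the other half, via $-\gamma\mu\|z^{t+1/2}-z^*\|^2 \le -\tfrac{\gamma\mu}{2}\|z^t-z^*\|^2 + \gamma\mu\|z^{t+1/2}-z^t\|^2$, supplies the contraction $(1-\mu\gamma/2)$. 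There is no slack left, so your route would force either a weaker contraction factor or a stricter bound on $\varepsilon$ than the theorem states.

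Your own ``One-step inequality'' paragraph in fact names the correct reservoir targets ($\|z^{s+1/2}-z^*\|^2$ and $\|z^{s+1/2}-z^s\|^2$), which is inconsistent with having performed the conversion. Drop the conversion step and the rest of your plan is the paper's proof.
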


\begin{proof}
    We start by using Lemma \ref{lem:1}:
    \begin{align*}
        \| z^{t+1} - z^*\|^2 &\leq \|z^t - z^*\|^2 - 2 \gamma \mu \| z^{t+\frac{1}{2}} - z^* \|^2 - 2 \gamma \langle F(z^*, \xi^t), z^{t + \frac{1}{2}} - z^* \rangle \\&\quad+ \gamma^2 L^2 \| z^t - z^{t+\frac{1}{2}} \|^2 - \| z^t - z^{t+\frac{1}{2}}\|^2.
    \end{align*}
    Take a look at the second term. We use the stepping back technique:
    \begin{align*}
        \langle F(z^*, \xi^t), z^{t+\frac{1}{2}} - z^* \rangle & =   \langle F(z^*, \xi^t), z^{t+\frac{1}{2}-\tau} - z^* \rangle + \langle F(z^*, \xi^t), z^{t+\frac{1}{2}} - z^{t+\frac{1}{2}-\tau}\rangle, 
    \end{align*}
    where $\tau$ is an arbitrary number such that $\tau \geq \tau_{mix}$. Using the last bound, one can obtain:
    \begin{align*}
        \| z^{t + 1} - z^*\|^2  &\leq \| z^t - z^*\|^2 - 2 \gamma \mu \| z^{t+\frac{1}{2}} - z^* \|^2 - 2 \gamma \langle F(z^*, \xi^t), z^{t+\frac{1}{2}-\tau} - z^* \rangle \\
        &\quad - 2 \gamma \langle F(z^*, \xi^t), z^{t+\frac{1}{2}} - z^{t+\frac{1}{2}-\tau} \rangle - \| z^{t+\frac{1}{2}} - z^t \|^2 \\
        &\quad+ \gamma^2 L^2 \| z^{t+\frac{1}{2}} - z^t \|^2\\
        & = \| z^t - z^*\|^2 - 2 \gamma \mu \| z^{t+\frac{1}{2}} - z^* \|^2 + 2 \gamma \langle F(z^*, \xi^t), - z^{t+\frac{1}{2}-\tau} + z^* \rangle \\
        &\quad + 2 \gamma \langle F(z^*, \xi^t), - z^{t+\frac{1}{2}} + z^{t+\frac{1}{2}-\tau}\rangle - \| z^{t+\frac{1}{2}} - z^t \|^2\\
        &\quad+ \gamma^2 L^2 \| z^{t+\frac{1}{2}} - z^t \|^2.
    \end{align*}
    Now we use Cauchy-Schwarz inequality \eqref{eq:cbs2}:
    \begin{align*}
        \| z^{t + 1} &- z^*\|^2  \leq \| z^t - z^*\|^2 - 2 \gamma \mu \| z^{t+\frac{1}{2}} - z^* \|^2 - 2 \gamma \langle F(z^*, \xi^t), z^{t+\frac{1}{2}-\tau} - z^* \rangle \\
        & + 2 \gamma \| F(z^*, \xi^t) \| \, \| z^{t+\frac{1}{2}} - z^{t+\frac{1}{2}-\tau} \| + \gamma^2 L^2 \| z^{t+\frac{1}{2}} - z^t \|^2 - \| z^{t+\frac{1}{2}} - z^t \|^2.
    \end{align*}
    Applying Lemma \ref{lem:2} and Assumption \ref{as:optimumlimited}, we get:
    \begin{align*}
        \| z^{t + 1} &- z^*\|^2 \leq \| z^t - z^*\|^2 - 2 \gamma \mu \| z^{t+\frac{1}{2}} - z^* \|^2 - 2 \gamma \langle F(z^*, \xi^t), z^{t+\frac{1}{2}-\tau} - z^* \rangle \\
        & + 2 \gamma \sigma_* \sum\limits_{k=0}^{\tau} [1 + \gamma L] \| z^{t+\frac{1}{2}-k} - z^{t-k} \| + \gamma^2 L^2 \| z^{t+\frac{1}{2}} - z^t \|^2 - \| z^{t+\frac{1}{2}} - z^t \|^2.
    \end{align*}
    Using Cauchy-Schwarz inequality \eqref{eq:cbs1} with $\beta  = \beta_1$ to be specified later, one can obtain:
    \begin{align*}
        \| z^{t + 1}  - z^*\|^2 &\leq \| z^t - z^*\|^2 - 2 \gamma \mu \| z^{t+\frac{1}{2}} - z^* \|^2 - 2 \gamma \langle F(z^*, \xi^t), z^{t+\frac{1}{2}-\tau} - z^* \rangle \\
        &\quad+ \frac{\gamma}{\beta_1} {\sigma_*}^2 + \gamma(1+\gamma L)^2\beta_1 \bigg[ \sum\limits_{k=0}^{\tau} \| z^{t+\frac{1}{2}-k} - z^{t-k} \| \bigg]^2 \\
        &\quad+ \gamma^2 L^2 \| z^{t+\frac{1}{2}} - z^t \|^2 - \| z^{t+\frac{1}{2}} - z^t \|^2.
    \end{align*}
    We now take the full mathematical expectation from both sides of the last inequality:
    \begin{align*}
        \mathbb{E}\| z^{t + 1}  - z^*\|^2 &\leq \mathbb{E}\| z^t - z^*\|^2 - 2 \gamma \mu \mathbb{E}\| z^{t+\frac{1}{2}} - z^* \|^2 - \mathbb{E}\| z^{t+\frac{1}{2}} - z^t \|^2 \\
        &\quad+ \frac{\gamma}{\beta_1} {\sigma_*}^2 + \gamma(1+\gamma L)^2\beta_1 \mathbb{E}\bigg[ \sum\limits_{k=0}^{\tau} \| z^{t+\frac{1}{2}-k} - z^{t-k} \| \bigg]^2 \\
        &\quad+ \gamma^2 L^2 \mathbb{E}\| z^{t+\frac{1}{2}} - z^t \|^2 - 2 \gamma \mathbb{E} \langle F(z^*, \xi^t), z^{t+\frac{1}{2}-\tau} - z^* \rangle.
    \end{align*}
    Using the convexity of the squared norm, we get:
    \begin{align*}
        \mathbb{E}\| z^{t + 1}  - z^*\|^2 &\leq \mathbb{E}\| z^t - z^*\|^2 - 2 \gamma \mu \mathbb{E}\| z^{t+\frac{1}{2}} - z^* \|^2 - \mathbb{E}\| z^{t+\frac{1}{2}} - z^t \|^2 \\
        &\quad+ \frac{\gamma}{\beta_1} {\sigma_*}^2 + \gamma(1+\gamma L)^2\beta_1 \tau \sum\limits_{k=0}^{\tau} \mathbb{E}\| z^{t+\frac{1}{2}-k} - z^{t-k} \|^2 \\
        &\quad + \gamma^2 L^2 \mathbb{E}\| z^{t+\frac{1}{2}} - z^t \|^2 - 2 \gamma \mathbb{E} \langle F(z^*, \xi^t), z^{t+\frac{1}{2}-\tau} - z^* \rangle.
    \end{align*}
    Applying Lemma \ref{lem:3} to the last inequality leads to:
    \begin{align*}
        \mathbb{E}\| z^{t + 1}  - z^*\|^2 &\leq \mathbb{E}\| z^t - z^*\|^2 - 2 \gamma \mu \mathbb{E}\| z^{t+\frac{1}{2}} - z^* \|^2 + {4 \gamma 2^{-\tau/\tau_{\mathrm{mix}}} \sigma_* \mathbb{E} \| z^{t+\frac{1}{2}-\tau} - z^* \|} \\
        &\quad+ \frac{\gamma}{\beta_1} {\sigma_*}^2 + \gamma(1+\gamma L)^2\beta_1 \tau \sum\limits_{k=0}^{\tau} \mathbb{E}\| z^{t+\frac{1}{2}-k} - z^{t-k} \|^2 \\
        &\quad+ \gamma^2 L^2 \mathbb{E}\| z^{t+\frac{1}{2}} - z^t \|^2 - \mathbb{E}\| z^{t+\frac{1}{2}} - z^t \|^2.
    \end{align*}
    Using Cauchy-Schwarz inequality \eqref{eq:cbs1} with {$\beta = \gamma$}, we obtain:
    \begin{align*}
        \mathbb{E}\| z^{t + 1}  - z^*\|^2 &\leq \mathbb{E}\| z^t - z^*\|^2 - 2 \gamma \mu \mathbb{E}\| z^{t+\frac{1}{2}} - z^* \|^2 + {2^{-\tau/\tau_{\mathrm{mix}}+1} \mathbb{E} \| z^{t+\frac{1}{2}-\tau} - z^* \|^2} \\
        &\quad + {\left(\frac{\gamma}{\beta_1} + 2\gamma^2 2^{-\tau/\tau_{\mathrm{mix}}}\right) {\sigma_*}^2} + \gamma(1+\gamma L)^2\beta_1 \tau \sum\limits_{k=0}^{\tau} \mathbb{E}\| z^{t+\frac{1}{2}-k} - z^{t-k} \|^2 \\
        &\quad+ (\gamma^2 L^2 - 1) \mathbb{E}\| z^{t+\frac{1}{2}} - z^t \|^2.
    \end{align*}
    For $t \geq 0$, let $p_t = p^t$ and $p = (1 - \mu \gamma / 2)^{-1}$. Here we multiply the above expression by $p_t$ and sum for $\tau \leq t < T$, hoping for cancellations:
    \begin{align*}
        \sum\limits_{t=\tau}^{T-1} p_t\mathbb{E}\| z^{t + 1}  - z^*\|^2 &\leq \sum\limits_{t=\tau}^{T-1}p_t\mathbb{E}\| z^t - z^*\|^2 - 2 \gamma \mu \sum\limits_{t=\tau}^{T-1} p_t\mathbb{E}\| z^{t+\frac{1}{2}} - z^* \|^2 \\
        &\quad+ {\left(\frac{\gamma}{\beta_1} + 2\gamma^2 2^{-\tau/\tau_{\mathrm{mix}}}\right) {\sigma_*}^2}\sum\limits_{t=\tau}^{T-1} p_t \\
        &\quad + {2^{-\tau/\tau_{\mathrm{mix}}+1}}\sum\limits_{t=\tau}^{T-1} p_t\mathbb{E} \| z^{t+\frac{1}{2}-\tau} - z^* \|^2\\
        &\quad + (\gamma^2 L^2 - 1) \sum\limits_{t=\tau}^{T-1} p_t\mathbb{E}\| z^{t+\frac{1}{2}} - z^t \|^2 \\
        &\quad+ \gamma(1+\gamma L)^2\beta_1 \tau \sum\limits_{t=\tau}^{T-1} p_t\sum\limits_{k=0}^{\tau} \mathbb{E}\| z^{t+\frac{1}{2}-k} - z^{t-k} \|^2.
    \end{align*}
    Using the fact that 
    $(1 - a/x)^{-x} \leq 2 e^{a} \leq 2e$ for any $x \geq 2$ and $0 \leq a \leq 1$, we can estimate  
    $p_\tau = (1 - \mu \gamma_1 /(2 \tau))^{-\tau} \leq 2e \leq 6$, where $\gamma_1 = \gamma\tau, \gamma \leq (\mu\tau)^{-1}$. Then, we can rearrange the sums:
    \begin{align*}\sum\limits_{t = \tau}^{T-1} p_{t} \sum\limits_{k=0}^{\tau} \mathbb{E}\| z^{t+\frac{1}{2}-k} - z^{t-k} \|^2 &\leq 
    p^{\tau}\sum\limits_{t = \tau}^{T-1} \sum\limits_{k=0}^{\tau} {p_{t-k}} \mathbb{E}\| z^{t+\frac{1}{2}-k} - z^{t-k} \|^2 \\
    &\leq 
    6 \tau \sum\limits_{t = 0}^{T-1} p_{t} \mathbb{E}\| z^{t+\frac{1}{2}} - z^{t} \|^2 .\end{align*}
    Now we can estimate:
    \begin{align}
        \label{eq:th1}
        \sum\limits_{t=\tau}^{T-1} p_t\mathbb{E}\| z^{t + 1}  - z^*\|^2 &\leq \sum\limits_{t=\tau}^{T-1}p_t\mathbb{E}\| z^t - z^*\|^2 - 2 \gamma \mu \sum\limits_{t=\tau}^{T-1} p_t\mathbb{E}\| z^{t+\frac{1}{2}} - z^* \|^2 \notag\\
        &\quad+ {\left(\frac{\gamma}{\beta_1} + 2\gamma^2 2^{-\tau/\tau_{\mathrm{mix}}}\right) {\sigma_*}^2}\sum\limits_{t=\tau}^{T-1} p_t \notag\\
        &\quad+ {2^{-\tau/\tau_{\mathrm{mix}}+1}}\sum\limits_{t=\tau}^{T-1} p_t\mathbb{E} \| z^{t+\frac{1}{2}-\tau} - z^* \|^2\\
        &\quad + (\gamma^2 L^2 - 1) \sum\limits_{t=\tau}^{T-1} p_t\mathbb{E}\| z^{t+\frac{1}{2}} - z^t \|^2 \notag\\
        &\quad+ 6\gamma(1+\gamma L)^2\beta_1 \tau^2 \sum\limits_{t = 0}^{T-1} p_{t} \mathbb{E}\| z^{t+\frac{1}{2}} - z^{t} \|^2 \notag.
    \end{align}
    Assume the following notation:
    \begin{align*}
        \Delta_\tau := { 2^{-\tau/\tau_{\mathrm{mix}}+1}} \sum\limits_{t=\tau}^{2\tau-1} p_t\mathbb{E} \| z^{t+\frac{1}{2}-\tau} - z^* \|^2 + 6\gamma(1+\gamma L)^2\beta_1\tau^2 \sum\limits_{t=0}^{\tau-1}p_t\mathbb{E}\|z^{t+\frac{1}{2}} - z^t\|^2.
    \end{align*}
    Then, rearranging \eqref{eq:th1}, one can obtain
    \begin{align*}
        \sum\limits_{t=\tau}^{T-1} p_t\mathbb{E}\| z^{t + 1}  - z^*\|^2 &\leq \sum\limits_{t=\tau}^{T-1}p_t\mathbb{E}\| z^t - z^*\|^2 - 2 \gamma \mu \sum\limits_{t=\tau}^{T-1} p_t\mathbb{E}\| z^{t+\frac{1}{2}} - z^* \|^2 \\&\quad+ \left(\frac{\gamma}{\beta_1} + {2\gamma^2 2^{-\tau/\tau_{\mathrm{mix}}}}\right) {\sigma_*}^2\sum\limits_{t=\tau}^{T-1} p_t \\&\quad+ {2^{-\tau/\tau_{\mathrm{mix}}+1}} \sum\limits_{t=2\tau}^{T-1} p_t\mathbb{E} \| z^{t+\frac{1}{2}-\tau} - z^* \|^2 \\
        &\quad+ (\gamma^2 L^2 - 1) \sum\limits_{t=\tau}^{T-1} p_t\mathbb{E}\| z^{t+\frac{1}{2}} - z^t \|^2\\
        &\quad+ 6\gamma(1+\gamma L)^2\beta_1 \tau^2 \sum\limits_{t = \tau}^{T-1} p_{t} \mathbb{E}\| z^{t+\frac{1}{2}} - z^{t} \|^2 + \Delta_{\tau}.
    \end{align*}
    It follows from Cauchy-Schwarz inequality \eqref{eq:cbs1} with $\beta = 1$, that:
    \begin{align*}
        -\gamma \mu \| z^{t+\frac{1}{2}} - z^* \|^2 \leq - \frac{\gamma \mu}{2} \| z^t - z^*\|^2 + \gamma \mu \| z^{t+\frac{1}{2}} - z^t \|^2.
    \end{align*}    
    Combining this with the fact that $${2^{-\tau/\tau_{\mathrm{mix}}+1}} \sum\limits_{t=2\tau}^{T-1} p_t\mathbb{E} \| z^{t+\frac{1}{2}-\tau} - z^* \|^2 \leq {12\cdot 2^{-\tau/\tau_{\mathrm{mix}}}} \sum\limits_{t=\tau}^{T-1} p_t\mathbb{E} \| z^{t+\frac{1}{2}} - z^* \|^2,$$ we get:
    \begin{align*}
        \sum\limits_{t=\tau}^{T-1} p_t\mathbb{E}\| z^{t + 1}  &- z^*\|^2 \leq \left(1 - \frac{\mu\gamma}{2}\right)\sum\limits_{t=\tau}^{T-1}p_t\mathbb{E}\| z^t - z^*\|^2 \\
        &+ {\left(12\cdot2^{-\tau/\tau_{\mathrm{mix}}} -  \mu\gamma\right)}\sum\limits_{t=\tau}^{T-1} p_t\mathbb{E}\| z^{t+\frac{1}{2}} - z^* \|^2 \\
        & + {\left(\frac{\gamma}{\beta_1} + 2\gamma^2 2^{-\tau/\tau_{\mathrm{mix}}}\right)} \sum\limits_{t=\tau}^{T-1} p_t{\sigma_*}^2 + \Delta_{\tau} \\
        & + \left(6\gamma(1+\gamma L)^2\beta_1 \tau^2 + \gamma^2 L^2 + \mu\gamma - 1\right) \sum\limits_{t=\tau}^{T-1} p_t\mathbb{E}\| z^{t+\frac{1}{2}} - z^t \|^2.
    \end{align*}
    Taking 
    {$$\gamma \leq \min\left\{\frac{1}{2L}, \frac{1}{4\mu}\right\},\quad\quad \tau \geq \tau_{\mathrm{mix}}\log \frac{12}{\mu\gamma},\quad\quad \beta_1 = \frac{1}{24\gamma\tau^2},$$}
    we obtain:
    \begin{align*}
        {12\cdot2^{-\tau/\tau_{\mathrm{mix}}} -  \mu\gamma \leq 0,}
    \end{align*}
    \begin{align*}
        6\gamma(1+\gamma L)^2\beta_1 \tau^2 + \gamma^2 L^2 + \mu\gamma - 1 \leq 0.
    \end{align*}
    Therefore, we can claim that:
    \begin{align*}
        \sum\limits_{t={\tau_{mix}}}^{T-1} p_t\mathbb{E}\| z^{t + 1} - z^*\|^2 \leq \sum\limits_{t={\tau_{mix}}}^{T-1} \bigg[ 1 - \frac{\gamma \mu}{2} \bigg]p_t\mathbb{E}\| z^t - z^*\|^2 + {25}\gamma^2{\tau^2_{mix}}\sum\limits_{t={\tau_{mix}}}^{T-1} p_t{\sigma_*}^2 + \Delta_{\tau_{mix}}.
    \end{align*}
    Finally, we substitute $p_t := \left(1 - \frac{\gamma \mu}{2}\right)^{-t}$:
    \begin{align*}
        \sum\limits_{t={\tau_{mix}}}^{T-1} \bigg[1 - \frac{\gamma \mu}{2} \bigg]^{-t} \mathbb{E}\| {z^{t+1}} - z^* \| &\leq \sum\limits_{t={\tau_{mix}}}^{T-1} \bigg[1 - \frac{\gamma \mu}{2} \bigg]^{-t+1} \mathbb{E}\| {z^{t}} - z^*\|^2 \\&\quad+ {25}\gamma^2{\tau^2_{mix}}\sigma_*^2\sum\limits_{t={\tau_{mix}}}^{T-1}\bigg[1 - \frac{\gamma \mu}{2} \bigg]^{-t} + \Delta_{\tau_{mix}};
    \end{align*}
    Upon removing the contracting terms, the following remains:
    \begin{align*}
        \mathbb{E}\| z^{T} - z^* \| &\leq \bigg[1 - \frac{\gamma \mu}{2} \bigg]^{T-{\tau_{mix}}} \mathbb{E}\| z^{\tau_{mix}} - z^*\|^2 + \bigg[1 - \frac{\gamma \mu}{2} \bigg]^{T} \Delta_{\tau_{mix}} \\&\quad+ {25}\gamma^2{\tau^2_{mix}}\sigma_*^2 \sum\limits_{t={\tau_{mix}}}^T \bigg[1 - \frac{\gamma \mu}{2} \bigg]^{T-t}.
    \end{align*}
    Next, we use that:
    \begin{equation*}
        \sum\limits_{t={\tau_{mix}}}^T \bigg[1 - \frac{\gamma \mu}{2} \bigg]^{T-t} = \sum\limits_{t=0}^{T-{\tau_{mix}}} \bigg[1 - \frac{\gamma \mu}{2} \bigg]^{t} \leq \sum\limits_{t=0}^{\infty} \bigg[1 - \frac{\gamma \mu}{2} \bigg]^{t} = \frac{2}{\gamma \mu},
    \end{equation*}
    and bound $\Delta_{\tau_{mix}}$:
    \begin{equation*}
        \Delta_{\tau_{mix}} \leq \sum\limits_{t=0}^{{\tau_{mix}}-1}\mathbb{E}\Big[ \|z^{t+\frac{1}{2}} - z^t\|^2 + \|z^{t+\frac{1}{2}} - z^*\|^2\Big],
        \end{equation*}
    leading us to:
    \begin{align*}
        \mathbb{E} \|z^{T+1} - z^*\|^2 \leq \Bigg(1 - \frac{\mu\gamma}{2}\Bigg)^T\Bigg[\Big(1-\frac{\mu\gamma}{2}\Big)^{-{\tau_{mix}}}\mathbb{E}\|z^{\tau_{mix}} - z^*\|^2 + \Delta_{\tau_{mix}}\Bigg] + \frac{{50}\gamma{\tau^2_{mix}}}{\mu}\sigma_*^2.
    \end{align*}
    This finishes the proof.
    \end{proof}
\begin{corollary}[Step tuning for Theorem \ref{theorem:1}]
    \label{cor:1}
    Under the conditions of Theorem \ref{theorem:1}, choosing $\gamma$ as 
    \begin{equation*}
        {\gamma \leq \min\Bigg\{ \frac{1}{2L}, \frac{1}{4\mu}, \frac{2\log(\max\{2, \frac{\mu^2r_{\tau_{mix}} T^2}{100\tau^2\sigma_*^2}\})}{\mu T} \Bigg\}},
    \end{equation*}
    {where $r_t := \|z^t - z^{*}\|^2$}, in order to achieve the $\epsilon$-approximate solution in terms of $\mathbb{E}\| z^T - z^*\|^2 \leq \epsilon^2$ it takes
    \begin{equation}
        \label{eq:estimation}
        {T = \Tilde{\mathcal{O}}\Bigg(\frac{L}{\mu}\log\frac{1}{\epsilon} + \frac{\tau^2_{mix}\sigma_*^2}{\mu^2\epsilon}\Bigg)} ~~~\text{oracle calls.}
    \end{equation}
\end{corollary}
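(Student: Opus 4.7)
The plan is to reduce the statement to a standard step-size tuning exercise applied to the guarantee of Theorem~\ref{theorem:1}. Writing the bound from that theorem as
$$\mathbb{E}\|z^{T+1}-z^*\|^2 \;\leq\; r_\tau\Bigl(1-\tfrac{\mu\gamma}{2}\Bigr)^{T} \;+\; \frac{56\,\gamma\,\tau^2\sigma_*^2}{\mu},$$
with $r_\tau := (1-\mu\gamma/2)^{-\tau}\mathbb{E}\|z^\tau - z^*\|^2 + \Delta_\tau$ playing the role of an effective initial distance, the goal is to make both summands at most $\epsilon^2/2$. Using $(1-x)^T\leq e^{-xT}$, the exponential summand is at most $\epsilon^2/2$ whenever $\gamma T \geq (2/\mu)\log(2 r_\tau/\epsilon^2)$, and the stochastic summand is at most $\epsilon^2/2$ whenever $\gamma \leq \mu\epsilon^2/(112\tau^2\sigma_*^2)$. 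This is precisely the regime where the classical three-branch step-size tuning lemma (familiar from the SGD literature of Stich and Koloskova--Loizou--Boreiri) applies.

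I would then split the analysis by which entry of the $\min$ defining $\gamma$ is active. In the branch $\gamma = 1/(2L)$, the exponential factor becomes $\exp(-\mu T/(4L))$ and forces $T = \mathcal{O}((L/\mu)\log(r_\tau/\epsilon^2))$. In the branch $\gamma = 1/(4\mu\tau)$ (which is what the internal calculations of Theorem~\ref{theorem:1} actually require, up to the apparent typo $1/(4\mu)$ in its statement), the exponential factor becomes $\exp(-T/(8\tau))$ and forces $T = \mathcal{O}(\tau\log(r_\tau/\epsilon^2))$. In the branch $\gamma = 2\log(\max\{2,\mu^2 r_\tau T^2/(112\tau^2\sigma_*^2)\})/(\mu T)$, substituting back into the noise term and solving for $T$ gives, after absorbing logarithmic factors into the $\tilde{\mathcal{O}}$, the dominant stochastic contribution $T = \tilde{\mathcal{O}}(\tau^2\sigma_*^2/(\mu^2\epsilon^2))$.

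Combining the three branches by taking the worst active constraint produces the claimed oracle bound
$$T \;=\; \tilde{\mathcal{O}}\!\left(\Bigl(\tau + \tfrac{L}{\mu}\Bigr)\log\tfrac{1}{\epsilon} \;+\; \tfrac{\tau^2\sigma_*^2}{\mu^2\epsilon^2}\right),$$
which matches \eqref{eq:estimation} up to the $\epsilon$ versus $\epsilon^2$ in the denominator of the stochastic term (again pointing to a typographic discrepancy in the statement rather than a substantive difficulty). The main obstacle I expect is purely bookkeeping: verifying that the self-referential logarithm $\log(\mu^2 r_\tau T^2/(112\tau^2\sigma_*^2))$ inside the definition of $\gamma$ collapses into genuinely polylogarithmic factors once $T$ is plugged in, and that the warmup quantity $r_\tau$, which itself depends on $\gamma$ through its $(1-\mu\gamma/2)^{-\tau}$ prefactor, is uniformly bounded for $\gamma\leq 1/(4\mu\tau)$ (so that the $\log r_\tau$ absorbed into $\tilde{\mathcal{O}}$ does not hide any hidden dependence on $T$ or $\epsilon$).
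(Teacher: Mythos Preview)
The paper does not supply a proof of Corollary~\ref{cor:1}; it is stated immediately after Theorem~\ref{theorem:1} and followed directly by the Discussion section. So there is nothing to compare against, and your proposal is exactly the standard step-size tuning argument (\`a la Stich) that one would expect here: write the Theorem~\ref{theorem:1} bound as $r_\tau(1-\mu\gamma/2)^T + 56\gamma\tau^2\sigma_*^2/\mu$, split into the three branches of the $\min$, and read off the iteration counts. That reasoning is correct.

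Your two flagged discrepancies are real. First, the constraint $\gamma\leq 1/(4\mu)$ in the statements of Theorem~\ref{theorem:1} and Corollary~\ref{cor:1} is inconsistent with the proof of Theorem~\ref{theorem:1}, which explicitly takes $\gamma\leq\min\{1/(2L),\,1/(4\mu\tau)\}$; the $\tau$ contribution in the deterministic term of~\eqref{eq:estimation} only arises from this stronger constraint, so you are right to use $1/(4\mu\tau)$. Second, with the accuracy criterion $\mathbb{E}\|z^T-z^*\|^2\leq\epsilon^2$ as written, the stochastic branch yields $T=\tilde{\mathcal{O}}(\tau^2\sigma_*^2/(\mu^2\epsilon^2))$, not $\tau^2\sigma_*^2/(\mu^2\epsilon)$. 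The comparison in the Discussion section quotes the competing bound from \cite{Beznosikov2023MarkovianNoise} with a single $\epsilon$ in the denominator, which suggests the intended criterion is $\mathbb{E}\|z^T-z^*\|^2\leq\epsilon$ rather than $\epsilon^2$; under that reading~\eqref{eq:estimation} is correct as stated and your derivation matches it after the obvious relabelling. Your remark that $r_\tau$ is uniformly bounded for $\gamma\leq 1/(4\mu\tau)$ (via the $(1-\mu\gamma/2)^{-\tau}\leq 6$ estimate already used inside the proof of Theorem~\ref{theorem:1}) closes the only loose end.
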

\section{Discussion.}
In spite of the fact that the Markov noise setup in the context of a classical optimization problem has a quite wide representation in the literature, there is considerably less work for the VI case. To the best of our knowledge, there are only three existing works on the topic of VI with Markovian stochasticity. Two of them, \cite{Wang2022} and \cite{solodkin2024methodsoptimizationproblemsmarkovian}, consider only monotone setting, obtaining results of the form $T = \widetilde{\mathcal{O}}\left( \frac{L\sigma^2}{\epsilon^2} + \frac{{\tau^2_{mix}} \sigma^4}{\epsilon^2}\right)$ and $T = \widetilde{\mathcal{O}}\left(\frac{L D^2}{\epsilon} + \frac{{\tau_{mix}} D^2 \sigma^2}{\epsilon^2} \right)$ 
respectively. The third work \cite{Beznosikov2023MarkovianNoise} provides the following convergence guarantees: $T = \Tilde{\mathcal{O}}\left(\frac{{\tau_{mix}} L}{\mu}\log\frac{1}{\epsilon} + \frac{{\tau_{mix}}\sigma^2}{\mu^2\epsilon}\right)$. This result is nearly identical to that of Corollary \ref{cor:1}, with the exception of the mixing time entry. However, the authors of \cite{Beznosikov2023MarkovianNoise} utilize the batching technique with batches of size $\Tilde{\mathcal{O}}({\tau_{mix}})$, resulting in a significant increase in gradient evaluations. Moreover, this bound is contingent upon the assumption of uniformly bounded gradient differences, while our analysis is considerably less demanding with the necessity in only bound at the optimal point.  
\newpage
\section{Numerical Experiments}
In this section, we present numerical experiments that are designed to investigate the effect of mixing time on the convergence rate.
\subsection{Problem formulation}

 Let $\lambda$, $\nu$ be positive real numbers. Let $b,c \in \R^d$ be vectors with randomly generated from $[-1, 1]$ real components. Finally, define $P \in \mathbb{R}^{d \times d}$ as a matrix of randomly generated real values such that $\textbf{spec} (P) \subset [0.1, 10]$.

Introducing this notation, we consider the following formulation of the problem \eqref{eq:main}:
\begin{equation}
    \label{prob}
    \min\limits_{x \in \mathbb{R}^d} \max\limits_{y \in \mathbb{R}^d} \left[ f(x, y) := x^T P y + b^T x + c^T y + \frac{\lambda}{2} \| x \|_2^2 - \frac{\nu}{2} \| y \|_2^2 \right].
\end{equation}

For this problem, the operator has the following form:

$$F(z) := \begin{pmatrix}
\nabla_x f(x, y)\\
- \nabla_y f(x, y)
\end{pmatrix} = \begin{pmatrix}
Py + b + \lambda x\\
-P^Tx - c - \nu y
\end{pmatrix}$$

\subsection{Setup}

In the numerical experiments, we consider the problem described above on the different ergodic Markov chains.
In order to compare the outcomes properly, we let all the Markov chains have the similar structure:

\begin{center}
\begin{tikzpicture}[scale=0.11]
\tikzstyle{every node}+=[inner sep=0pt]
\draw [black] (17.5,-17.5) circle (3);
\draw (17.5,-17.5) node {$A$};
\draw [black] (54.2,-17.5) circle (3);
\draw (54.2,-17.5) node {$B$};
\draw [black] (19.61,-15.37) arc (131.74942:48.25058:24.389);
\fill [black] (52.09,-15.37) -- (51.83,-14.46) -- (51.16,-15.21);
\draw (35.85,-8.68) node [above] {$1\mbox{ }-\mbox{ }p$};
\draw [black] (52.238,-19.767) arc (-44.60174:-135.39826:23.017);
\fill [black] (19.46,-19.77) -- (19.67,-20.69) -- (20.38,-19.99);
\draw (35.85,-27.12) node [below] {$1\mbox{ }-\mbox{ }p$};
\draw [black] (14.82,-18.823) arc (-36:-324:2.25);
\draw (10.25,-17.5) node [left] {$p$};
\fill [black] (14.82,-16.18) -- (14.47,-15.3) -- (13.88,-16.11);
\draw [black] (56.88,-18.823) arc (-144:144:2.25);
\draw (61.65,-17.5) node [right] {$p$};
\fill [black] (56.88,-16.18) -- (57.23,-15.3) -- (57.84,-16.11);
\end{tikzpicture}
\end{center}

Here $p$ is a unique parameter of the Markov chain, arrows denote the transitions, and $A$ and $B$ represent the states. Each state implies a unique current distribution that generates the noise. In our experiments, we assume that both of the states have a normal distribution, in particular, state $A$ generates a value from the distribution $\mathcal{N}(0.1, \sigma^2)$, state $B$ generates a value from $\mathcal{N}(-0.1, \sigma^2)$, where $\sigma$ is a varying parameter for deeper study. The generated noise $\xi$ is considered to be additive:
$$\left[F(z, \xi)\right]_i := \left[F(z)\right]_i + \xi_i.$$

\subsection{Results}
We first performed the experiments with $\sigma=0.001$ (Figure \ref{fig: std_0.001}), $\sigma=0.01$ (Figure \ref{fig: std_0.01}), $\sigma=0.1$ (Figure \ref{fig: std_0.1}) and $\sigma=0$ (Figure \ref{fig: std_0}) as the standard deviations of normal distributions described in the setup.

\begin{figure}[ht]
     \centering
     \subfloat[$\sigma = 0.001$]{%
     \resizebox*{7cm}{!}{\includegraphics{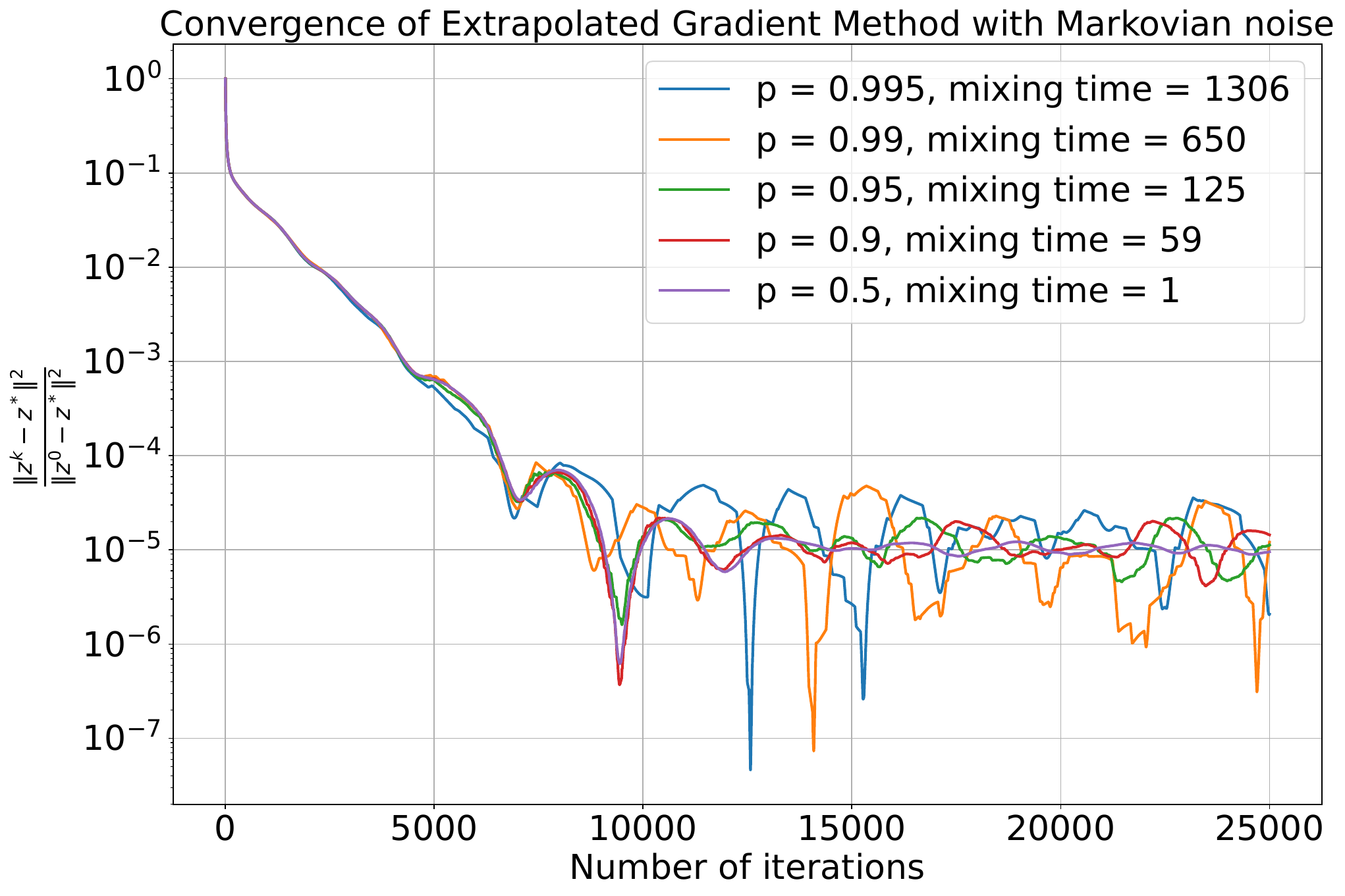}}\label{fig: std_0.001}}
     \hfill
     \subfloat[$\sigma = 0.01$]{%
     \resizebox*{7cm}{!}{\includegraphics{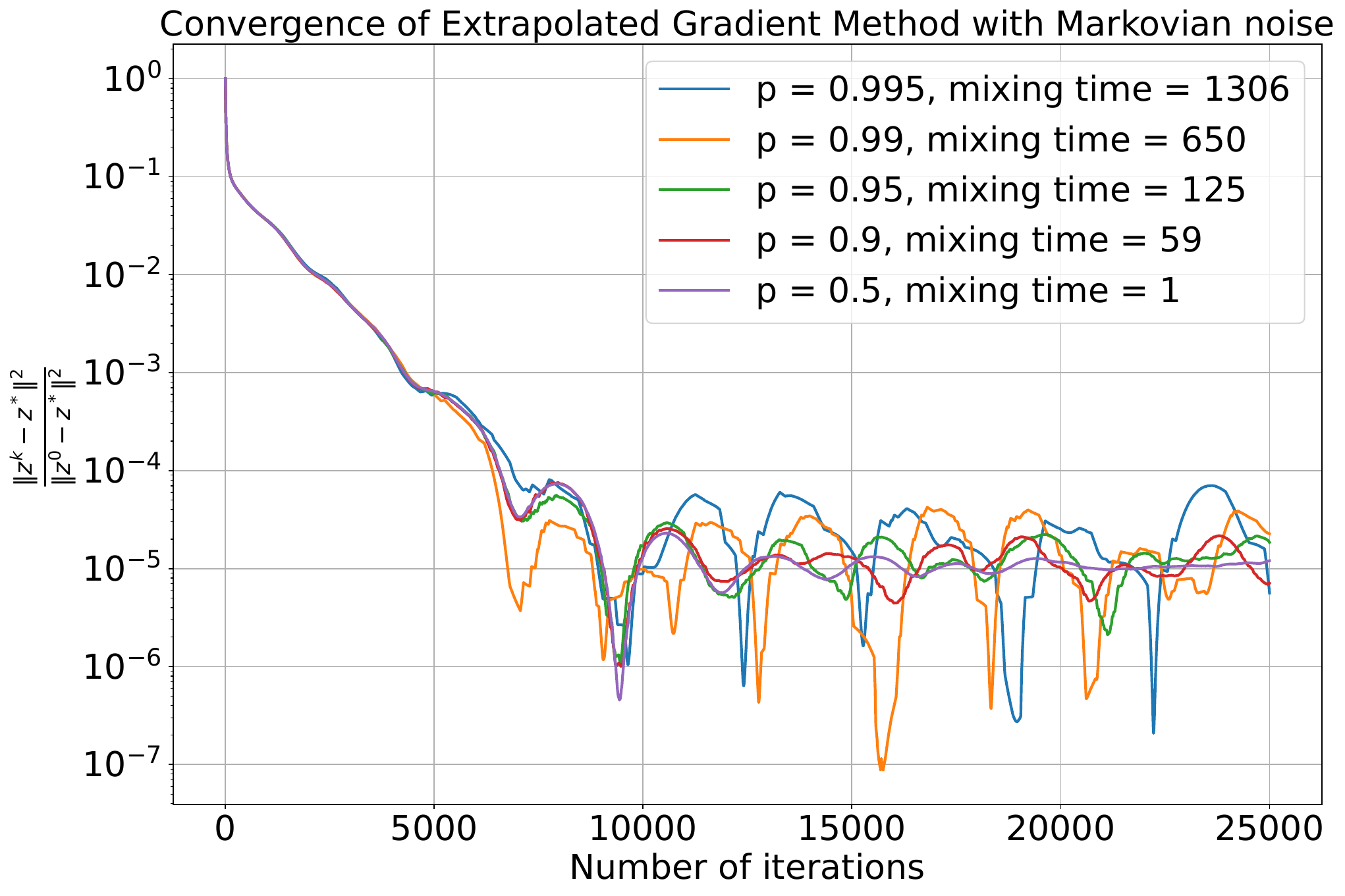}}\label{fig: std_0.01}}
     \vfill
     \subfloat[$\sigma = 0.1$]{%
     \resizebox*{7cm}{!}{\includegraphics{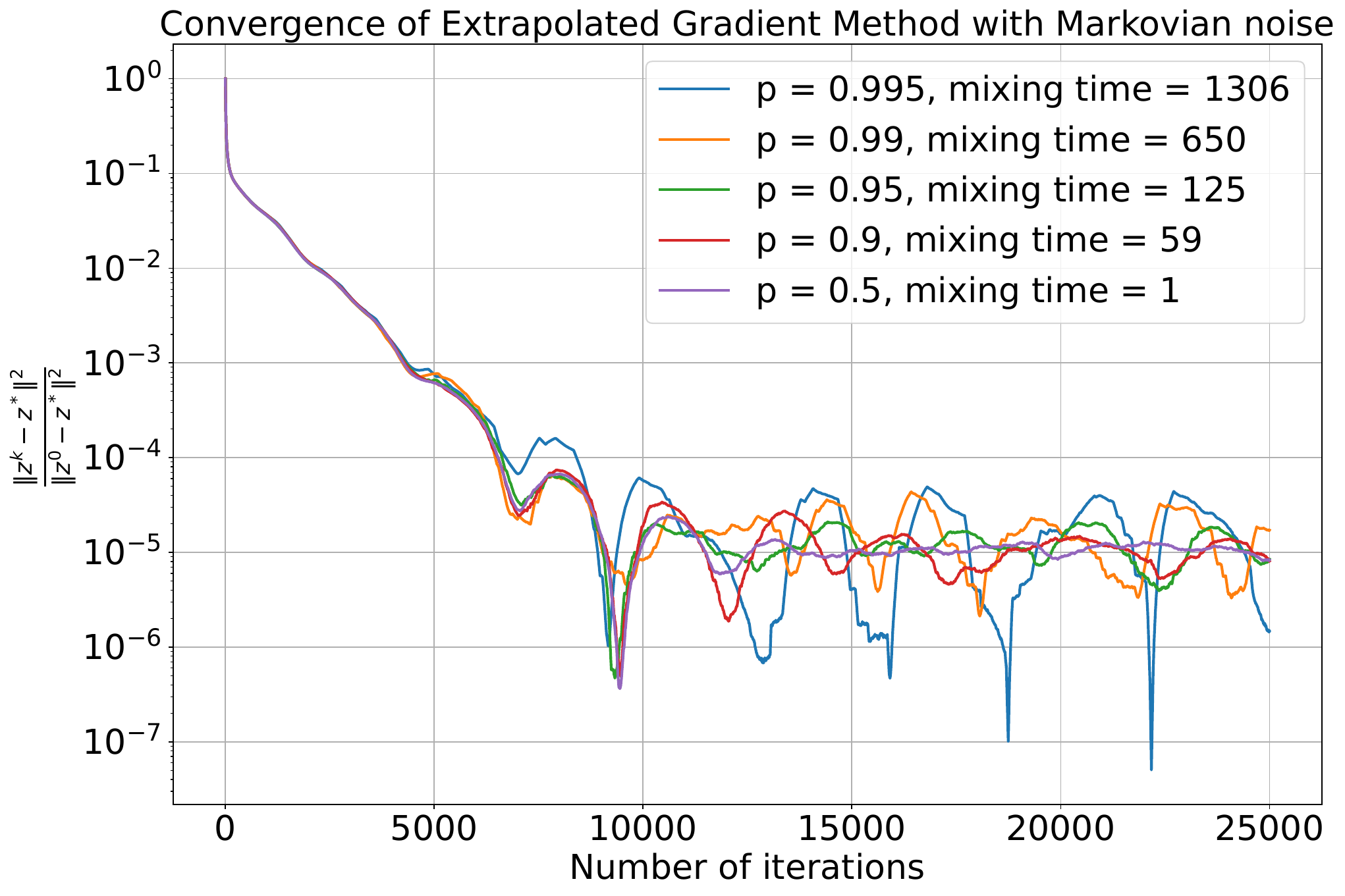}}\label{fig: std_0.1}}
     \hfill
     \subfloat[$\sigma = 0$]{%
     \resizebox*{7cm}{!}{\includegraphics{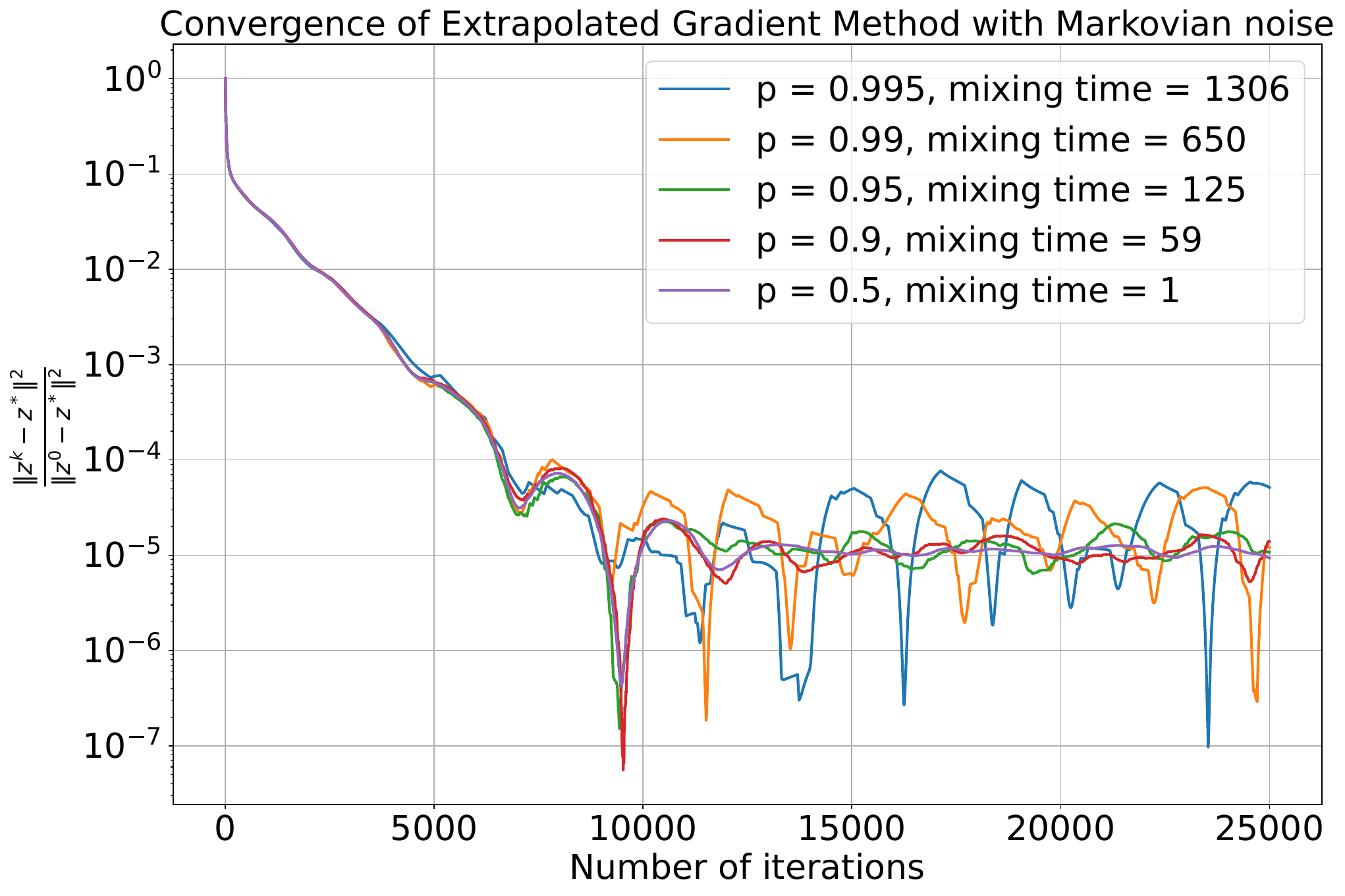}}\label{fig: std_0}}
        \caption{Comparison of the performance of Algorithm \ref{alg:EGM} for varying mixing time parameter $\tau$ for the different values of the variance $\sigma$.}
        \label{fig:1}
\end{figure}
As illustrated in Figure \ref{fig:1}, the oscillation amplitude of the method is dependent on the mixing time. Furthermore, stochasticity does not impact this dependence, as evidenced by the consistent character of convergence observed in experiments with varying standard deviations. At this juncture, it seems appropriate to examine the nature of this dependence in greater detail. 

Now, we fix the values of standard deviation $\sigma = 0.1$ and expectation $\mu = 0.1$ for the noise variables. In order to most accurately assess the impact of the mixing time parameter on the convergence region, we ran Algorithm \ref{alg:EGM} $K = 14$ times for each value of $p$ with identical hyperparameters $\gamma = \frac{1}{2L}$. Subsequently, the sample variance was calculated for each solution, i.e. denoting the results of $j$-th run by the $\{z^j_t\}_{t = 0}^{T}$, we calculate $\frac{1}{K}\sum\limits_{j = 1}^{K}\bigg[\frac{1}{T}\sum\limits_{t = 1}^{T}\Big(z_t^j-\overline{z}^j\Big)^2\bigg]$, where $\overline{z}^j = \frac{1}{T}\sum\limits_{t = 1}^{T}z_t^j$. As a consequence, we were able to establish a reliable correlation between the convergence region and the mixing time, which serves to substantiate the conclusions reached in the theoretical analysis. However, the results of Figure \ref{fig: deviation} indicate that the nature of this dependence is tend to be linear, whereas the theoretical estimation \eqref{eq:estimation} implies the quadratic correlation. This raises a slight research gap, namely whether it is feasible to design a more intricate proof that would yield a more precise assessment of the dependence on mixing time. Alternatively, it may be possible to modify the experimental procedure in order to more clearly observe the effect of mixing time parameter. In any case, this seems to us as the perfect topic for future research.
\begin{figure}[ht]
    \centering
    \includegraphics[width=0.6\textwidth]{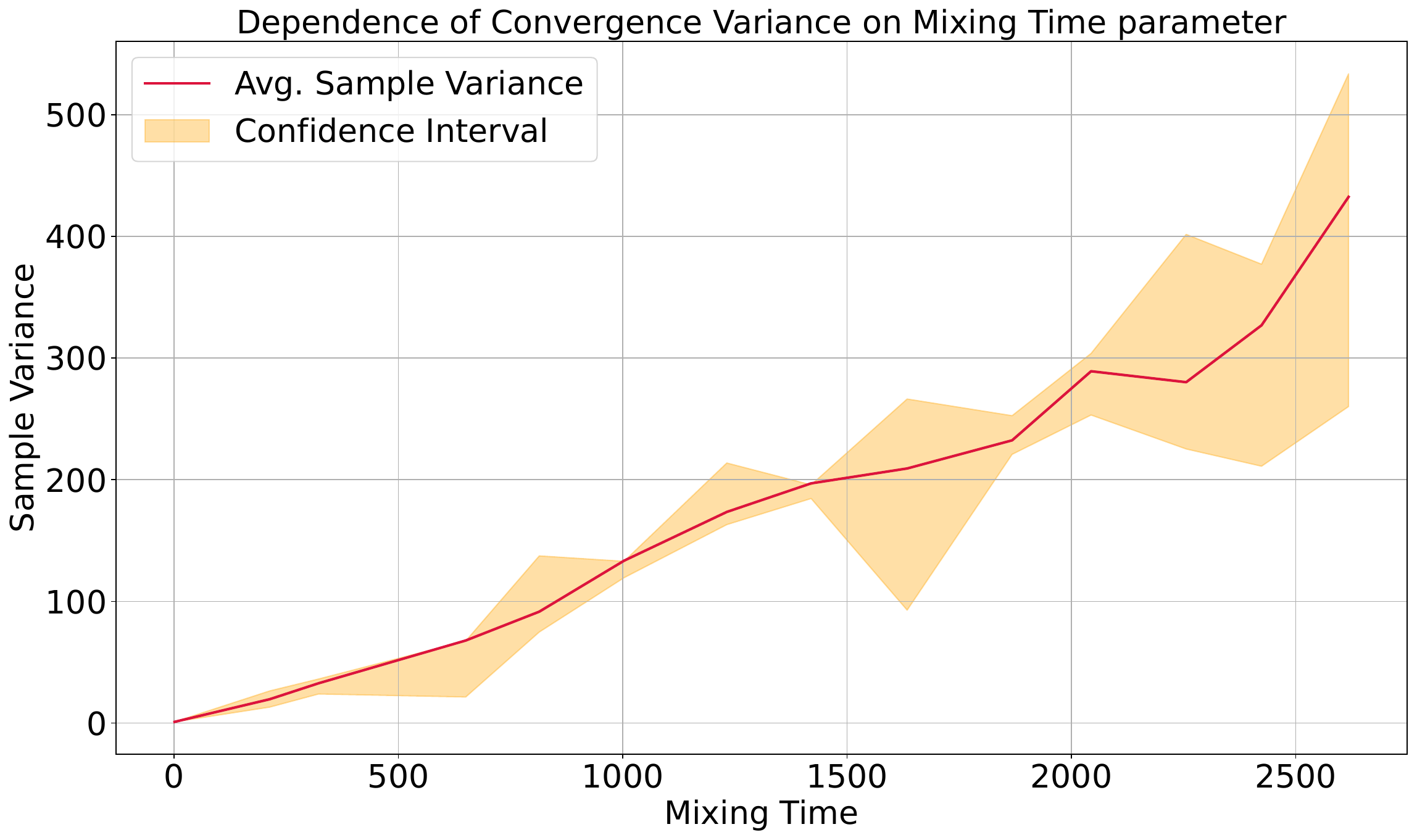}
    \caption{Comparison of convergence regions of Algoritm \ref{alg:EGM} for different values of mixing time. For the sake of clarity, the values are normalized on the variance for $\tau = 1$.}
    \label{fig: deviation}
\end{figure}

\section*{Acknowledgements}

The research was supported by Russian Science Foundation (project No. 25-71-00058)

\bibliographystyle{tfs}
\bibliography{references}

\appendix
\part*{Supplementary Material}

\section{Auxiliary Facts}
\begin{lemma}[Cauchy-Schwarz inequality]
    For any $a, b\in\mathbb{R}^d$ and $\beta > 0$ the following inequalities hold
    \begin{align}
        \label{eq:cbs1}
        2\langle a, b\rangle \leq \frac{\|a\|^2}{\beta} + \beta\|b\|^2,
    \end{align}
    \begin{align}
        \label{eq:cbs2}
        |\langle a, b\rangle| \leq \|a\|\|b\|.
    \end{align}
\end{lemma}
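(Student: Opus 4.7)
The plan is to derive both inequalities from the elementary fact that the squared Euclidean norm of any vector in $\mathbb{R}^d$ is nonnegative, which is essentially the only input we need beyond bilinearity of the inner product.

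For the second inequality \eqref{eq:cbs2}, I would first dispose of the trivial case $b = 0$, where both sides vanish. Otherwise, I would introduce the single-parameter family $v(t) := a - t b$ for $t \in \mathbb{R}$ and expand
\begin{equation*}
  0 \leq \|v(t)\|^2 = \|a\|^2 - 2t\langle a, b\rangle + t^2 \|b\|^2.
\end{equation*}
Viewing the right-hand side as a quadratic in $t$ with positive leading coefficient $\|b\|^2$, nonnegativity forces the discriminant to be nonpositive, giving $\langle a, b\rangle^2 \leq \|a\|^2 \|b\|^2$. Taking square roots yields $|\langle a, b\rangle| \leq \|a\|\|b\|$. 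Equivalently, one may substitute the minimizer $t = \langle a, b\rangle / \|b\|^2$ directly, producing the same conclusion.

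For the first inequality \eqref{eq:cbs1}, I would apply the same positivity principle to a rescaled difference that absorbs the parameter $\beta$. Specifically, since $\beta > 0$, both $1/\sqrt{\beta}$ and $\sqrt{\beta}$ are real, so
\begin{equation*}
  0 \;\leq\; \Bigl\| \tfrac{1}{\sqrt{\beta}}\,a - \sqrt{\beta}\,b \Bigr\|^2 \;=\; \tfrac{1}{\beta}\|a\|^2 - 2\langle a, b\rangle + \beta\|b\|^2,
\end{equation*}
and rearranging this gives \eqref{eq:cbs1} at once. As an alternative route, one can start from \eqref{eq:cbs2} and invoke the scalar AM--GM inequality $2xy \leq x^2 + y^2$ with $x = \|a\|/\sqrt{\beta}$ and $y = \sqrt{\beta}\,\|b\|$ to obtain the same bound.

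There is no genuine obstacle here: both inequalities are classical and the arguments are one-liners once the right auxiliary vector is written down. The only point requiring any care is the trivial $b = 0$ case in \eqref{eq:cbs2} (needed to avoid dividing by $\|b\|^2$) and the observation that $\beta > 0$ guarantees $\sqrt{\beta}$ is defined and positive in the derivation of \eqref{eq:cbs1}.
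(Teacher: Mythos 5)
Your proof is correct: the discriminant (or minimizing-$t$) argument gives \eqref{eq:cbs2}, and expanding $\bigl\|\beta^{-1/2}a - \beta^{1/2}b\bigr\|^2 \geq 0$ gives \eqref{eq:cbs1}, with the $b=0$ case and the positivity of $\beta$ handled appropriately. The paper states this lemma as a classical auxiliary fact without providing any proof, so there is nothing to compare against; your argument is the standard one and is complete.
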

\end{document}